\tikzset{
    slanted/.style={rotate=-90, anchor = south},
    slantedswap/.style={rotate=-90, anchor=north}
}
  \newcommand{\twodigitspage}{
   \ifnum \value{page} < 10
   0
   \fi
   \thepage
  }
   \theoremstyle{definition}
   \newtheorem{definition}{Definition}[section]
   \newtheorem{remark}[definition]{Remark}
   \newtheorem{examples}[definition]{Examples}
   \theoremstyle{plain}   
   \newtheorem{proposition}[definition]{Proposition}
   \newtheorem{lemma}[definition]{Lemma}
   \newtheorem{theorem}[definition]{Theorem}
   \newtheorem{corollary}[definition]{Corollary}
   \newtheorem{hypothesis}[definition]{Hypothesis}
   \theoremstyle{definition}
   \newcommand{\coker}{\operatorname{coker}}
\newcommand{\Hom}{\mathrm{Hom}}
\newcommand{\Gal}{\operatorname{Gal}}
\newcommand{\Spec}{\operatorname{Spec}}
\newcommand{\catname}[1]{\textnormal{{\textsf{#1}}}}
\newcommand{\Der}{\catname{D}}
\newcommand{\R}{\catname{R}}
\newcommand{\Le}{\catname{L}}
\newcommand{\Tor}{\catname{Tor}}
\newcommand{\tor}{\mathrm{tor}}
\newcommand{\tf}{\mathrm{tf}}
\newcommand{\p}{\mathfrak{p}}
\newcommand{\pnull}{\mathrm{null}}
\newcommand{\colim@}[2]{%
  \vtop{\m@th\ialign{##\cr
    \hfil$#1\operator@font colim$\hfil\cr
    \noalign{\nointerlineskip\kern1.5\ex@}#2\cr
    \noalign{\nointerlineskip\kern-\ex@}\cr}}%
}
\newcommand{\colim}{%
  \mathop{\mathpalette\colim@{\rightarrowfill@\scriptscriptstyle}}\nmlimits@
}
\renewcommand{\varprojlim}{%
  \mathop{\mathpalette\varlim@{\leftarrowfill@\scriptscriptstyle}}\nmlimits@
}
\renewcommand{\varinjlim}{%
  \mathop{\mathpalette\varlim@{\rightarrowfill@\scriptscriptstyle}}\nmlimits@
}
\newcommand{\Rlim@}[2]{
  \vtop{\m@th\ialign{##\cr
    \hfil$#1\catname{R}\operator@font lim$\hfil\cr
    \noalign{\nointerlineskip\kern1.5\ex@}#2\cr
    \noalign{\nointerlineskip\kern-\ex@}\cr}}%
}
\newcommand{\Rlim}{%
  \mathop{\mathpalette\Rlim@{\leftarrowfill@\scriptscriptstyle}}\nmlimits@
}
  \newcommand*{\id}{\mathrm{id}}
  \DeclareMathOperator*{\im}{im}
\newcommand{\QQ}{\mathbb{Q}}
\newcommand{\ZZ}{\mathbb{Z}}
\newcommand{\NN}{\mathbb{N}}
\newcommand{\cP}{\mathcal{P}}
\newcommand{\fq}{\mathfrak{q}}
\newcommand{\fp}{\mathfrak{p}}
\def\th@plain{%
  \thm@notefont{}
  \itshape 
}
\def\th@definition{%
  \thm@notefont{}
  \normalfont 
}
\begin{document}

\title[On non-Noetherian Iwasawa theory]{On non-Noetherian \\
 Iwasawa theory}

 \author{David Burns, Alexandre Daoud and Dingli Liang}

\begin{abstract} We prove a general structure theorem for finitely presented torsion modules over a class of commutative rings that need not be Noetherian. As a first application, we then use this result to study the  Weil-\'etale cohomology groups of $\mathbb{G}_m$ for curves over finite fields. 
\end{abstract}

\address{King's College London,
Department of Mathematics,
London WC2R 2LS,
U.K.}
\email{david.burns@kcl.ac.uk, alexandre.daoud@kcl.ac.uk, dingli.1.liang@kcl.ac.uk}


\vspace*{-1cm}

\maketitle


\section{Introduction}

Let $p$ be a prime, $k$ the function field of a smooth projective curve over the field with $p$ elements and $K$ a Galois extension of $k$ for which $\Gal(K/k)$ is topologically isomorphic to the direct product $\ZZ_p^\NN$ of a countably infinite number of copies of $\ZZ_p$. Then the completed $p$-adic group ring $\ZZ_p[[\ZZ_p^\NN]]$ is not Noetherian and so classical techniques of Iwasawa theory do not apply in this setting. With this problem in mind, Bandini, Bars and  Longhi introduced a notion of `pro-characteristic ideal' as a generalisation of the classical notion of characteristic ideal, and used it to study several natural Iwasawa-theoretic modules over $K/k$ (cf. \cite{bbl, bbl2, bbl3}). These efforts culminated in their proof, with Anglès, of a main conjecture for divisor class groups over Carlitz-Hayes cyclotomic extensions of $k$ (see \cite{abbl}) and, more recently, both Bandini and Coscelli \cite{bc} and Bley and Popescu \cite{bp} have extended this sort of result to a wider class of Drinfeld modular towers. 

By adopting a slightly more conceptual algebraic approach, we shall now strengthen the theory developed in these earlier articles. As the starting point for this, we identify a natural class of  commutative rings (that includes, as a special case, all rings of the form $\ZZ_p[[\ZZ_p^\NN \times G]]$ with $G$ a finite abelian group) that are not, in general, Noetherian, but for which one can prove a structure theorem for the category of finitely presented torsion modules (see Theorem \ref{bourbaki-theorem}). This result is perhaps of some independent interest and, in particular, leads naturally to a general notion of characteristic ideal that extends and  refines the pro-characteristic ideal construction used previously. 

We next prove that the inverse limits with respect to corestriction of the $p$-completions of the degree one Weil-\'etale cohomology groups of $\mathbb{G}_m$ over finite extensions of $k$ in $K$ are finitely presented torsion $\ZZ_p[[\ZZ_p^\NN]]$-modules. By applying our structure theory to these modules, we are then able to derive stronger, and more general,  versions of the main results of each of \cite{abbl}, \cite{bc} and \cite{bp} (see Theorem \ref{main result} and Remarks \ref{abbl rem} and \ref{bp rem}). At the same time, this approach also allows us to prove that, surprisingly, the inverse limit with respect to norms of the $p$-parts of the degree zero divisor class groups of finite extensions of $k$ in $K$ is finitely generated as a $\ZZ_p[[\ZZ_p^\NN]]$-module for only a remarkably small class of extensions $K/k$ (see Corollary \ref{fg cor}). 

There are also natural families of Galois extensions of group $\ZZ_p^\NN$ in number field settings (see, for example, the `cyclotomic radical $p$-extensions' described by Min\'a\v c et al in \cite[Th. A.1]{mrt}). The algebraic results obtained here can be used in a similar way to study Iwasawa-theoretic modules over such extensions, and these applications will be discussed elsewhere.    
\smallskip

\noindent{}{\bf Acknowledgements} The authors are very grateful to Francesc Bars and Meng Fai Lim for helpful advice about the literature in this area, and to Andrea Bandini, Werner Bley and Cristian Popescu for their interest and generous encouragement. 

\section{Structure theories over non-Noetherian rings}\label{structure section}


In this section we fix a commutative unital ring $A$ and write $Q(A)$ for its total quotient ring and $\cP = \cP_A$ for the set of its prime ideals of height one. Given an $A$-module $M$ we write $M_\tor = M_{A\cdot{\rm tor}}$ for the $A$-torsion submodule of $M$ and $M_\tf$ for the quotient of $M$ by $M_\tor$. We then define a (possibly empty) subset of $\cP$ by setting   
\[ \cP(M) = \cP_A(M) :=  \cP\cap \mathrm{Support}(M_\tor)  = \{\fp \in \cP: (M_\tor)_\fp \not= (0)\}.\]

\subsection{Finitely presented modules}

The following notion will play a key role in the sequel. 

\begin{definition}\label{tame-definition}
    A finitely generated $A$-module $M$ will be said to be \textit{amenable} if it has both of the following properties: 
\begin{itemize}
\item[(P$_1$)] for every prime ideal $\fp$ that is maximal amongst those contained in $\bigcup_{\mathfrak{q} \in \cP(M)} \mathfrak{q}$, the localisation $A_\fp$ is a valuation ring (that is, its ideals are linearly ordered by inclusion). 
\item[(P$_2$)] $\cP(M)$ is finite and every prime ideal in $\cP(M)$ is finitely generated. 
\end{itemize}
\end{definition}

\begin{remark}\label{prime-avoidance-remark}
    If $\cP(M)$ is finite (as is required by (P$_2$) and automatically the case if, for example, $A$ is Noetherian), then the prime avoidance lemma implies that (P$_1$) is satisfied if and only if $A_\mathfrak{q}$ is a valuation ring for all $\mathfrak{q}$ in $\cP(M)$. In the general case, however, prime ideals that are contained in $\bigcup_{\mathfrak{q} \in \cP(M)} \mathfrak{q}$ need not have height one.
\end{remark}

As usual, an $A$-module is said to be \textit{pseudo-null} if its localisation  vanishes at every prime ideal of height at most one,  and a map of $A$-modules is said to be a \textit{pseudo-isomorphism} if its kernel and cokernel are both pseudo-null.

We can now prove the structure result that is the starting point of our theory. 

\begin{theorem}\label{bourbaki-theorem} Let $M$ be a finitely presented $A$-module with property (P$_1$). Then the following claims are valid. 

\begin{itemize}
\item[(i)] If $M$ is torsion, then there exists an $A$-module $N$, a finite family of principal ideals $\set{L_\tau}_{\tau\in \mathcal{T}}$ and a pseudo-isomorphism of $A$-modules  
    \begin{align}
        M \oplus N \to \bigoplus_{\tau \in \mathcal{T}} A/L_\tau.\label{pseudiosmorphism-display}
    \end{align}
\item[(ii)] If $Q(A)$ is semisimple, then the following claims are also valid.
\begin{itemize}
\item[(a)] If 
$M$ is both amenable and torsion, then in the pseudo-isomorphism (\ref{pseudiosmorphism-display}) one can take $N$ to be zero and replace each $L_\tau$ by a power of a prime ideal in $\cP(M)$.
\item[(b)] In general, there exists a pseudo-isomorphism of $A$-modules $M \to M_\tor \oplus M_\tf$. 
\end{itemize}
\end{itemize}  
\end{theorem}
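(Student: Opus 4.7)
The plan is to prove Part (i) by globalising the classical structure theorem for finitely presented modules over valuation rings, and then to deduce the sharper statements in Part (ii) by exploiting both the semisimplicity of $Q(A)$ and (for (a)) the finiteness and principal-after-localisation properties supplied by (P$_2$).

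For Part (i), I first observe that since localisation of a valuation ring is again a valuation ring, property (P$_1$) implies that $A_\fp$ is a valuation ring for every $\fp \in \cP(M)$. Over any valuation ring a finitely presented module is a finite direct sum of cyclic modules (a classical fact), so $M_\fp$ admits such a decomposition at each $\fp \in \cP(M)$. The task is then to globalise: choose a presentation $A^m \xrightarrow{\Phi} A^n \to M \to 0$ and work with the Fitting ideals of $\Phi$, which are finite in number and behave well under localisation. This is essential because, in the absence of (P$_2$), $\cP(M)$ may itself be infinite, ruling out any na\"ive pointwise patching. From the successive Fitting quotients one extracts a finite family of principal ideals $\{L_\tau\}_{\tau \in \mathcal{T}}$ and constructs a candidate morphism $M \oplus N \to \bigoplus_{\tau \in \mathcal{T}} A/L_\tau$; the auxiliary module $N$ is the mechanism for absorbing the global obstruction to simultaneously diagonalising $\Phi$ over all the valuation rings $A_\fp$. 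That this morphism is a pseudo-isomorphism is then verified locally: at each $\fp \in \cP(M)$ one matches it against the local cyclic decomposition, and at all other height-one primes both sides vanish by construction.

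For Part (ii)(a), starting from the decomposition of (i) and invoking (P$_2$), each $\fp \in \cP(M)$ is finitely generated and hence principal in $A_\fp$. Thus each local summand $A_\fp/xA_\fp$ can be rewritten as $A_\fp/\fp^n A_\fp$ for a suitable exponent $n$, so the principal ideals $L_\tau$ may be chosen to be powers of primes in $\cP(M)$. The semisimplicity of $Q(A)$, together with the torsion hypothesis on $M$, forces any residual torsion-free contribution of $N$ to be pseudo-null and allows the correction module to be absorbed. For Part (ii)(b), I would consider the exact sequence $0 \to M_\tor \to M \to M_\tf \to 0$. At each $\fp \in \cP(M)$ the localisation $(M_\tf)_\fp$ is a finitely presented torsion-free module over the valuation ring $A_\fp$ and hence free, so the sequence splits locally; at height-one primes outside $\cP(M)$ one has $(M_\tor)_\fp = 0$ trivially; and at minimal primes the semisimplicity of $Q(A)$ forces $M_\tor$ to localise to zero. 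The extension class in $\mathrm{Ext}^1_A(M_\tf, M_\tor)$ therefore has vanishing image under localisation at every prime of height at most one, and a local-to-global patching argument then produces the required pseudo-isomorphism.

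The main obstacle is the globalisation in Part (i): without (P$_2$), one cannot iterate the valuation ring structure theorem over a finite list of primes, and one is forced to pass to globally defined invariants (most naturally the Fitting ideals of $\Phi$) and to introduce the correction module $N$ to absorb the global obstruction to diagonalisation. The remaining arguments, although technically involved, are essentially local verifications combined with formal consequences of the semisimplicity of $Q(A)$.
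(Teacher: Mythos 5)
Your plan contains the right local observation (at every $\fp\in\cP(M)$ the ring $A_\fp$ is a valuation ring, and a finitely presented module over it is a direct sum of cyclics), but the step you yourself identify as the main obstacle — globalisation — is asserted rather than proved, and it is precisely where the content of the theorem lies. Saying that "from the successive Fitting quotients one extracts a finite family of principal ideals $L_\tau$" does not work as stated: the Fitting ideals of a presentation matrix are finitely generated but in no way principal over $A$, and no mechanism is given for producing the map $M\oplus N\to\bigoplus_\tau A/L_\tau$ or the module $N$, nor for checking pseudo-nullity of its kernel and cokernel at the (possibly infinitely many) height-one primes. The paper's proof proceeds differently and you would need some version of its two key devices: it inverts the single multiplicative set $S=A\setminus\bigcup_{\fp\in\cP(M)}\fp$, so that every localisation of $A'=S^{-1}A$ at a maximal ideal is a valuation ring, applies Warfield's decomposability theorem for finitely presented modules over such rings to get $M'\oplus N'\cong\bigoplus_\tau A'/(a'_\tau)$ (note that the correction module $N$ already appears at this localised level, because Warfield only makes $M'$ a direct summand of a sum of cyclics — it is not a device for "absorbing a failure to diagonalise $\Phi$"), and then descends the two maps using that $\Hom$ from and to finitely presented modules commutes with localisation, the crucial point being that $\iota_2\circ\iota_1$ is multiplication by an element of $S$, which is what forces the relevant kernels and cokernels to be pseudo-null. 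None of this descent mechanism is present in your sketch.

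The same problem recurs in part (ii). For (ii)(a), rewriting each local summand $A_\fp/xA_\fp$ as $A_\fp/\fp^nA_\fp$ is only a statement about localisations; to replace the global ideals $L_\tau$ by powers of primes of $\cP(M)$, and to show $N$ may be taken to be zero, the paper must first upgrade $A'$: Endo's results make it a finite product of Pr\"ufer domains, Fuchs--Salce removes $N'$ over the resulting semi-local Pr\"ufer components, and (P$_2$) together with Cohen's theorem shows $A'$ is Noetherian, hence a product of Dedekind domains, before the classical structure theorem can be invoked and descended; your appeal to "semisimplicity forces the residual contribution of $N$ to be pseudo-null" is not an argument. For (ii)(b), local splitting of $0\to M_\tor\to M\to M_\tf\to 0$ at every prime of height at most one does not yield a global pseudo-isomorphism by "patching": there may be infinitely many such primes, and — more seriously — the natural place to split the sequence is over $A'$, whose maximal ideals can have height greater than one (cf. Remark \ref{prime-avoidance-remark}), so height-$\le 1$ vanishing of localised Ext classes controls nothing there. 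The paper instead uses that $M'_\tf$ is projective over $A'$ (Endo) to split over $A'$, and then descends the projector through the finite presentation of $M$ to an explicit map $\kappa=(\pi,\tau)$ whose kernel and cokernel are killed after localising at any height-$\le 1$ prime. As it stands, then, the proposal has genuine gaps in all three parts.
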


\begin{proof} To prove claim (i) we assume that $M$ is $A$-torsion. We also note  that if $\cP(M) = \emptyset$, then $M$ is pseudo-null and there is nothing to prove. We therefore assume that $\cP(M)\not= \emptyset$, set $S := A\setminus \bigcup_{\fp \in \cP(M)} \fp$ and write $(-)'$ for the localization functor $S^{-1}(-)$. 

The maximal ideals of $A'$ are in one-to-one correspondence with the primes of $A$ that are maximal amongst those contained in $\bigcup_{\fq \in \cP(M)} \fq$. Hence, from condition (P$_1$), it follows that the localisation of $A'$ at each maximal ideal is a valuation ring. We may therefore apply Warfield's Structure Theorem \cite[Th. 3]{warfield} to deduce the existence of an $A'$-module $N'$ and a finite collection $\{a_\tau'\}_{\tau \in \mathcal{T}}$ of elements of $A'\setminus (A')^\times$ for which there is an isomorphism of $A'$-modules
    \begin{align}
        \psi: M' \oplus N' \cong \bigoplus_{\tau \in \mathcal{T}} A'/(a_\tau'). \label{warfield-structure}
    \end{align}
We now choose elements $\{a_\tau\}_{\tau \in \mathcal{T}}$ of $A\setminus S = \bigcup_{\fp \in \cP(M)} \fp$ with $(a_\tau)' = (a_\tau')$ for each $\tau\in \mathcal{T}$. Then, since both $M$ and $\bigoplus_{\tau \in \mathcal{T}} A/(a_\tau)$ are finitely presented $A$-modules (the former by assumption), the canonical maps  

    \begin{align}\label{hom-commutes-with-localisation}
        \Hom_{A}\bigl(M, \bigoplus_{\tau \in \mathcal{T}} A/(a_\tau)\bigr)' \xrightarrow{\sim}&\, \Hom_{A'}\bigl(M', \bigoplus_{\tau \in \mathcal{T}} A'/(a_\tau')\bigr),\\
        \Hom_{A}\bigl(\bigoplus_{\tau \in \mathcal{T}} A/(a_\tau),M\bigr)' \xrightarrow{\sim}&\, \Hom_{A'}\bigl(\bigoplus_{\tau \in \mathcal{T}} A'/(a_\tau'),M'\bigr)  \notag\end{align}
   are both bijective. This implies the existence of homomorphisms of $A$-modules 
   
   \[ \iota_1: M \to \bigoplus_{\tau \in \mathcal{T}} A/(a_\tau)\quad\text{and}\quad \iota_2: \bigoplus_{\tau\in \mathcal{T}} A/(a_\tau) \to M\]
   such that, for suitable elements $s_1$ and $s_2$ of $S$, the maps $\iota'_1/s_1$ and $\iota'_2/s_2$ are respectively equal to the composites
   \[ M' \xrightarrow{({\rm id},0)} M'\oplus N' \xrightarrow{\psi} \bigoplus_{\tau\in \mathcal{T}} A'/(a_\tau')\quad\text{and}\quad \bigoplus_{\tau\in \mathcal{T}}A'/(a_\tau') \xrightarrow{\psi^{-1}} M'\oplus N' \xrightarrow{({\rm id},0)} M'.\]
Set $N := \ker(\iota_2)$. Then, since the endomorphism $\iota_2\circ \iota_1$ of $M$ is given by multiplication by $s_2s_1$ and the latter element is not contained in any prime in $\cP(M)$, the modules $\ker(\iota_1)$, ${\rm coker}(\iota_2)$ and $\iota_1(M)\cap N$ are all pseudo-null and the inclusion 
\[ \iota_1(M) + N \to \bigoplus_{\tau\in \mathcal{T}} A/(a_\tau)\]
is a pseudo-isomorphism. Given this, the tautological short exact sequence
\[ 0 \to \iota_1(M)\cap N \xrightarrow{x \mapsto (x,x)} \iota_1(M)\oplus N \xrightarrow{(x,y) \mapsto x-y} \iota_1(M)+ N \to 0\]
implies that the composite map 
\[ M \oplus N \xrightarrow{(\iota_1,{\rm id})} \iota_1(M)\oplus N \xrightarrow{(x,y) \mapsto x-y} \bigoplus_{\tau\in \mathcal{T}} A/(a_\tau)\]
is a pseudo-isomorphism. This proves claim (i) with $L_\tau = (a_\tau)$ for each $\tau \in \mathcal{T}$.
    
In the remainder of the argument we no longer require, except when explicitly stated, that $M$ is a torsion module, but we do assume that the ring $Q(A)$ is semisimple, and hence regular. Then, since the localisation of $A'$ at each maximal ideal is a valuation ring, results of Endo \cite[\S5, Prop. 10, Prop. 11 and Cor.]{endo} imply that $A'$ is the direct product $\prod_{t \in T}A'_{t}$ over a finite index set $T$ of semi-hereditary (or Pr\"ufer) domains $A'_t$. 

In particular, if $M$ is an amenable torsion module, then $\cP(M)$ is finite and, for each $t\in T$, the ring $A_t'$ is a semi-local Pr\"ufer domain and the $A_t'$-component of $M'$ is both finitely presented and torsion. In this case, therefore, we can apply the stronger structure theorem of Fuchs and Salce \cite[Cor. III.6.6, Th. V.3.4]{fuchs-salce} to each ring $A_t'$ in order to deduce the existence of an isomorphism (\ref{warfield-structure}) for which the module $N'$ is zero. Then, in this case, the module ${\rm coker}(\iota_1)' = {\rm coker}(\psi)$ vanishes and so ${\rm coker}(\iota_1)_\fp$, and hence also $N_\fp$, vanishes for all $\fp$ in $\cP(M)$. 

Next we suppose, in addition, that every prime ideal in $\cP(M)$ is finitely generated and we claim this implies that every prime ideal of $A'$ is finitely generated. To see this we note every prime ideal of $A'$ is of the form $\mathfrak{B} = \mathfrak{B}_0\times \prod_{t \in T\setminus\{t_0\}}A'_t$ where $\mathfrak{B}_0$ is a prime ideal of the domain $A'_{t_0}$ for some $t_0\in T$. If $\mathfrak{B}_0 = (0)$, then $\mathfrak{B}$ is finitely generated. If $\mathfrak{B}_0 \not= (0)$, then $\mathfrak{Q} := (0)\times \prod_{t \in T\setminus\{t_0\}}A'_t$ is a prime ideal of $A'$ that is strictly contained in $\mathfrak{B}$. Now, since $\cP(M)$ is assumed to be finite, the prime avoidance lemma implies that $\mathfrak{B}$ and $\mathfrak{Q}$ correspond to prime ideals $\fp$ and $\fp_1$ of $A$ with $\fp_1 \subsetneq \fp \subseteq \fq$ for some $\fq \in \cP(M)$. In particular, since $\fq$ has height one, this implies  $\fp = \fq$ and hence that $\mathfrak{B}$ is finitely generated, as claimed.  

At this stage, we can apply Cohen's Theorem \cite[Th. 2]{cohen} to deduce that $A'$, and hence each component $A'_t$, is Noetherian. 
Then the localisation $A'_\mathfrak{B}$ of $A'$ at each prime ideal $\mathfrak{B}$ is Noetherian, a domain (since each component $A'_t$ of $A'$ is a domain) and either a field (if $\mathfrak{B}$ corresponds to the zero ideal of some component $A_t'$) or a valuation ring (by Remark \ref{prime-avoidance-remark} and the assumption $M$ is amenable) and hence therefore a discrete valuation ring. Taken together, these facts imply that every component ring $A'_t$ of $A'$ is a Dedekind domain. We can therefore appeal to the usual structure theorem for finitely generated torsion modules over such rings to deduce that each ideal $(a_\tau')$ in the decomposition (\ref{warfield-structure})  (with $N' = (0)$) can be replaced by the $S$-localisation of a power of a prime ideal in $\cP(M)$, and hence each ideal $(a_\tau)$ in (\ref{hom-commutes-with-localisation}) by a power of a prime ideal in $\cP(M)$. In particular, in this case the module $N$ is pseudo-null (since we already observed that $N_\fp$ vanishes for all $\fp$ in $\cP(M)$) and so can be taken to be zero, as required to prove claim (ii)(a). 

To prove claim (ii)(b), we do not assume either that $M$ is torsion or that $M_\tor$ is amenable. We do however continue to assume that $Q(A)$ is semisimple and hence, by the above argument, that $A'$ is a finite direct product of semi-hereditary domains. Thus, by the general result of \cite[\S5, Cor.]{endo}, we know that $M_\tf'$ is a projective $A'$-module and hence that there exists an isomorphism of $A'$-modules of the form $M' \cong M'_\tf \oplus M_\tor'$.
    
Now, since $M$ is a finitely presented $A$-module, the natural map 
    \begin{align*}
        \Hom_A(M,M_\tor)' \to \Hom_{A'}(M',M_\tor')
    \end{align*}
    is bijective. In particular, there exists a homomorphism $\phi: M \to M_\tor$ and an element $s_1 \in S$ with the property that $\phi'/s_1$ corresponds under this identification to the projector of $M'$ onto $M'_\tor$. As such, $\phi'/s_1$ restricts to the submodule $M_\tor'$ to give the identity. We can therefore find an element $s_2$ of $S$ such that the map $\tau := s_2\cdot\phi$ restricted to $M_\tor$ is equal to $s_1s_2\cdot \id_{M_\tor}$.
    
    We now write $\pi$ for the canonical projection $M \to M_\tf$ and consider the map 
    \[ \kappa : M \to M_\tf \oplus M_\tor; \quad m \mapsto (\pi(m), \tau(m)).\]
One then checks that $\ker(\kappa) = \ker(\tau)\cap M_\tor$ and that $\coker(\kappa)$ is equal to the cokernel of the endomorphism of $M_\tor$ induced by $\tau$ and, since $s_1s_2 \in S$,  these modules are both pseudo-null. It follows that  the above map $\kappa$ is the required pseudo-isomorphism.
\end{proof}

In view of Theorem \ref{bourbaki-theorem}(ii), the following class of rings will be of interest in the sequel.  

\begin{definition}\label{amenable ring def} A commutative unital ring $A$ will be said to be \textit{amenable} if it has both of the following properties: 
\begin{itemize}
\item[(P$_3$)] $Q(A)$ is semisimple. 
\item[(P$_4$)] Finitely presented torsion $A$-modules are amenable (in the sense of Definition \ref{tame-definition}). 
\end{itemize}
\end{definition}

It is clear that a Noetherian integrally closed domain (or equivalently, a Noetherian Krull domain) is amenable in the above sense and also such that every finitely generated module is finitely presented. For such rings, Theorem \ref{bourbaki-theorem} simply recovers the classical structure theorem of Bourbaki \cite[Chap. VII, \S\,4, Th. \@4 and Th. \@5]{bourbaki}. However, the latter result can also be applied in more general situations such as those listed below (that are relevant for later arguments).

\begin{examples}\label{tame-examples}\text{}  \

\noindent{}(i) Let $A$ be an arbitrary Krull domain. 
In this case $Q(A)$ is a field (and so semisimple), $\cP_A$ is non-empty,  
         the localisation of $A$ at each prime in $\cP_A$ is a discrete valuation ring and every non-zero ideal is contained in only finitely many prime ideals in $\cP_A$. In particular, if $M$ is a non-zero finitely generated torsion $A$-module, then $\cP_A(M)$ is finite (as it is the subset of $\cP_A$ comprising primes containing the annihilator of $M$) so that $M$ has property (P$_1$) and admits a pseudo-isomorphism of the form (\ref{pseudiosmorphism-display}) with $N = 0$. Further, if every prime ideal of $A$ of height one is finitely generated, then $A$ is amenable.\ 
         
\noindent{}(ii) If $A$ is a unique factorisation domain, 
then $A$ is a Krull domain for which every height one prime ideal is principal and so the above discussion implies $A$ is amenable. In fact, for such a ring, the only essential difference between the argument of Theorem \ref{bourbaki-theorem} and that of Bourbaki  referred to above is that we require the module $M$ to be finitely presented, rather than merely finitely generated, in order to guarantee the existence of the isomorphism (\ref{hom-commutes-with-localisation}). 

\noindent{}(iii) Let $R$ be a $p$-adically complete integrally closed domain, $G$ a finite abelian group and $A$ the group ring $R[G]$. Write $f$ for the canonical (integral) ring homomorphism $R \to A$, $f^\ast: {\rm Spec}(A) \to {\rm Spec}(R)$ for the induced morphism of spectra and $f^\ast(M)$ for each $A$-module $M$ for the $R$-module obtained by restriction through $f$. Note that $Q(A)$ is semisimple and  $f^\ast$ is surjective with finite fibres. Fix $\mathfrak{p}\in {\rm Spec}(A)$ and set $\mathfrak{q} := f^\ast(\mathfrak{p})$. Then $\mathfrak{p}$ is finitely generated (over both $A$ and $R$) if $\mathfrak{q}$ is finitely generated, and if $R_\mathfrak{q}$ is a valuation ring, then $A_\mathfrak{p}$ is a valuation ring for every $\mathfrak{p}\in (f^\ast)^{-1}(\mathfrak{q})$ if and only if $|G| \notin \mathfrak{q}$. In addition, for any finitely generated $A$-module $M$ the following equivalences are valid:  
\begin{itemize}
    \item[$\bullet$] $M$ is finitely presented (over $A$) $\Longleftrightarrow$  $f^\ast(M)$ is finitely presented (over $R$);
    \item[$\bullet$] $M$ is a torsion $A$-module $\Longleftrightarrow$  $f^\ast(M)$ is a torsion $R$-module; 
    \item[$\bullet$] $f^\ast(\cP_{\!A}(M)) = \cP_{\!R}(f^\ast(M))$ and so $\cP_{\!A}(M)$ is finite $\Longleftrightarrow$ $\cP_{\!R}(f^\ast(M))$ is finite;
    \item[$\bullet$] $M$ is a pseudo-null $A$-module $\Longleftrightarrow$ $f^\ast(M)$ is a pseudo-null $R$-module. 
\end{itemize}
In particular, if  $f^\ast(M)$ is a finitely presented amenable torsion $R$-module for which no prime ideal in $\cP_{\!R}(f^\ast(M))$ contains $|G|$, then $M$ is a finitely presented amenable torsion $A$-module. However, if $p$ divides $|G|$, then $A$ is not an amenable ring since $A_\fp$ is not a valuation ring for any height one prime ideal $\fp$ that contains $p$ (and each such $\fp$ is contained in the support of the finitely presented torsion $A$-module $M= A/Ap$).   

\noindent{}(iv) Fix a natural number $n$ and, in the setting of (iii), take $R$ to be the completed $p$-adic group ring $\ZZ_p[[\ZZ^n_p]]$ and assume $p$ divides $|G|$. Then $A = R[G]$ is Noetherian (but not integrally closed or a domain) and the above discussion implies that a finitely generated torsion $A$-module $M$ is amenable if its $\mu$-invariant (as an $R$-module) vanishes. Hence, in this case, Theorem \ref{bourbaki-theorem} provides the following `equivariant' refinement of the structure theorem for Iwasawa modules: if the $\mu$-invariant of $M$ vanishes, then $\cP_A(M)$ is finite and $M$ is pseudo-isomorphic, as an $A$-module, to a finite direct sum of modules of the form $A/\mathfrak{p}^{e_\mathfrak{p}}$, with $\fp\in \cP_A(M)$ and $e_\fp\in \mathbb{N}$. 
\end{examples}


\subsection{Generalised characteristic ideals}\label{char sec} 

If $Q(A)$ is semisimple, then one can define a generalised notion of `characteristic ideal' as follows. 

\begin{definition} Assume $Q(A)$ is semisimple and let $M$ be a finitely presented torsion amenable $A$-module. Then, by Theorem \ref{bourbaki-theorem}(ii)(a), for each $\fp$ in $\cP_{\!A}(M)$ there exists a finite set $\{e_{\fp,i}\}_{1\le i\le n_\fp}$ of natural numbers  $e_{\fp,i}$ for which there is a pseudo-isomorphism of $A$-modules 
\begin{equation}\label{pseudo choice} M \to \bigoplus_{\fp \in \cP_{\!A}(M)} \bigoplus_{i=1}^{i=n_\fp} A/\fp^{e_{\fp,i}}.\end{equation}
The \textit{characteristic ideal} of $M$ is the ideal of $A$ obtained by setting

\begin{align*}
    \mathrm{char}_A(M) := \prod_{\fp \in \cP_{\!A}(M)} \fp^{\sum_{i=1}^{i=n_\fp}e_{\fp,i}}.
\end{align*}
\end{definition}
\smallskip

For each natural number $n$, the completed group ring $\ZZ_p[[\ZZ_p^n]]$ is amenable (in the sense of Definition \ref{amenable ring def}), and the above ideals coincide with the classical characteristic ideals of finitely generated (torsion) $\ZZ_p[[\ZZ_p^n]]$-modules.

The next result shows that, in more general situations, these ideals still retain some of the key properties of classical characteristic ideals.

In the sequel we refer to a finitely presented $A$-module as `quadratically presented' if, for some natural number $d$, it lies in an exact sequence of the form 
\begin{equation}\label{quadratic} A^d \xrightarrow{\theta} A^d \xrightarrow{\pi} M \to 0.\end{equation} 
%

\begin{proposition}\label{char-lemma}
    If $Q(A)$ is semisimple, then the following claims are valid. 
        \begin{itemize}
    \item[(i)] If $M$ a finitely presented torsion $A$-module, then the following claims are valid. 
    \begin{itemize}
    \item[(a)] Assume $M$ is amenable. Then $\mathrm{char}_A(M)$ is independent of the choice of pseudo-isomorphism (\ref{pseudo choice}) and there exists a principal ideal $L$ of $A$ such that $L_\fq = \mathrm{char}_A(M)_\fq$ for all $\fq$ in $\cP_{\!A}$.  
    \item[(b)] Assume $A = R[G]$ for a $p$-adically complete Krull domain $R$ and a finite abelian group $G$. Then $M$ is amenable if every prime in $\cP_{\!R}(M)$ is finitely generated and doesn't contain $|G|$. If this is the case, then the ideal ${\rm char}_A(M)$ is principal and, if $M$ is quadratically presented, equal to the initial Fitting ideal ${\rm Fit}_A^0(M)$ of $M$ as an $A$-module.  
    \end{itemize}
        \item[(ii)] Let $0 \to M_1 \to M_2 \to M_3 \to 0$ be an exact sequence of finitely generated $A$-modules. Then the following claims are valid.
    \begin{itemize}
    \item[(a)] If $M_2$ is a finitely presented amenable torsion $A$-module, then $M_3$ is a finitely presented amenable torsion $A$-module and  $\mathrm{char}_A(M_2) \subseteq \mathrm{char}_A(M_3)$.
    \item[(b)] If $M_1$ and $M_3$ are finitely presented amenable torsion $A$-modules, then $M_2$ is a finitely presented amenable torsion $A$-module. and  
        \[ \mathrm{char}_A(M_2) = \mathrm{char}_A(M_1)\cdot\mathrm{char}_A(M_3).\] 
   \end{itemize}
  \end{itemize}
\end{proposition}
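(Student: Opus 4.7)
The key invariant throughout this proposition is the length $\ell_{A_\fp}(M_\fp)$, which for $\fp\in\cP_{\!A}(M)$ and amenable $M$ is well-defined because Remark \ref{prime-avoidance-remark} combined with the finiteness of $\cP_{\!A}(M)$ shows that $A_\fp$ is a discrete valuation ring. Since any pseudo-isomorphism (\ref{pseudo choice}) becomes an isomorphism after localising at each such $\fp$, one has $\sum_{i=1}^{n_\fp} e_{\fp,i} = \ell_{A_\fp}(M_\fp)$ independently of the chosen pseudo-isomorphism, which immediately gives the well-definedness statement in claim (i)(a). The same invariance reduces claim (ii) to a length-theoretic book-keeping exercise: localising the short exact sequence at each $\fp$ gives $\cP_{\!A}(M_2) = \cP_{\!A}(M_1)\cup\cP_{\!A}(M_3)$ in case (ii)(b) and $\cP_{\!A}(M_3)\subseteq \cP_{\!A}(M_2)$ in case (ii)(a); combined with Remark \ref{prime-avoidance-remark}, this gives amenability of the remaining module. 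Standard arguments for finitely presented modules supply finite presentation, torsion-ness is immediate, and the additivity (respectively, monotonicity under quotients) of length in short exact sequences over the DVRs $A_\fp$ delivers the factorisation $\mathrm{char}_A(M_2) = \mathrm{char}_A(M_1)\mathrm{char}_A(M_3)$ in (ii)(b) (respectively, the inclusion $\mathrm{char}_A(M_2)\subseteq \mathrm{char}_A(M_3)$ in (ii)(a)).

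For the principal ideal $L$ in (i)(a), the plan is to invoke Theorem \ref{bourbaki-theorem}(i) to produce a pseudo-isomorphism $\phi\colon M\oplus N\to\bigoplus_{\tau\in\mathcal{T}} A/(a_\tau)$ with each $(a_\tau)$ principal, and to take $L = (\prod_\tau a_\tau)$. Then at each $\fq\in\cP_{\!A}$ one has $L_\fq = \fq^{\sum_\tau v_\fq(a_\tau)} = \fq^{\ell((M\oplus N)_\fq)}$, so the task reduces to arranging that $N$ is pseudo-null. The Fuchs-Salce step in the proof of Theorem \ref{bourbaki-theorem}(ii)(a) already gives $N_\fp = 0$ for every $\fp\in\cP_{\!A}(M)$; the description $A' = S^{-1}A = \prod_t A'_t$ in terms of semi-hereditary domains from that same proof then allows one to refine the choice of the $(a_\tau)$ so that each is supported only at primes in $\cP_{\!A}(M)$. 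With this sharper choice one has $N_\fq = 0$ at all $\fq\in\cP_{\!A}$, hence $N$ is pseudo-null, and consequently $L_\fq = \fq^{\ell(M_\fq)} = \mathrm{char}_A(M)_\fq$ for all $\fq\in\cP_{\!A}$.

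Claim (i)(b) divides into three parts. Amenability of $M$ as an $A$-module follows at once from Examples \ref{tame-examples}(iii): the Krull hypothesis on $R$ together with Examples \ref{tame-examples}(i) forces $\cP_{\!R}(M)$ to be finite, while the remaining conditions on its primes are exactly those required by the transfer principle in that example. For the identification $\mathrm{char}_A(M) = \mathrm{Fit}_A^0(M)$ when $M$ admits a quadratic presentation $A^d\xrightarrow{\theta}A^d\to M\to 0$, observe that $\theta\otimes_A Q(A)$ is an isomorphism (since $M$ is torsion and $Q(A)$ is semisimple), so $\det\theta$ is a non-zero-divisor and $\mathrm{Fit}_A^0(M) = (\det\theta)$ is principal; Smith normal form for $\theta_\fp$ over each DVR $A_\fp$ then gives $(\det\theta)A_\fp = \fp^{\sum_i e_{\fp,i}}A_\fp = \mathrm{char}_A(M)A_\fp$, and combined with trivial agreement outside $\cP_{\!A}(M)$ and the principality of both ideals, this yields the required global equality. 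The principality of $\mathrm{char}_A(M)$ in the general (not necessarily quadratically presented) case is the main obstacle. The key input is that for each $\fq\in\cP_{\!R}(M)$ the hypothesis $|G|\notin\fq$ makes $R_\fq[G]$ a finite étale extension of the DVR $R_\fq$, so $R_\fq[G]$ decomposes as a finite product $\prod_i A_{\fp_i}$ of DVRs indexed by the primes of $A$ over $\fq$, and a direct calculation in this product then shows that $\mathrm{char}_A(M)\cdot A_\fq$ is principal. Assembling these local principal generators into a single global principal ideal of $A$ is the main technical challenge, handled by a careful reduction to the quadratically presented case via a suitable approximation.
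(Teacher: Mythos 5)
Most of what you write for claims (i)(a) and (ii) runs along the same lines as the paper: the well-definedness and the behaviour in exact sequences are reduced to lengths over the discrete valuation rings $A_\fp$ for $\fp\in\cP_{\!A}(M)$, the principal ideal $L$ is taken to be $\prod_\tau L_\tau$ for a pseudo-isomorphism onto $\bigoplus_\tau A/L_\tau$ with pseudo-null complement, and amenability/finite presentation are handled by localising the sequence and citing Remark \ref{prime-avoidance-remark} and standard facts about finitely presented modules. The problem is claim (i)(b). You correctly obtain that ${\rm Fit}_A^0(M)=(\det\theta)$ is principal and that $(\det\theta)A_\fq={\rm char}_A(M)A_\fq$ for each relevant $\fq$, but your passage from these local equalities to the global identity ${\rm char}_A(M)={\rm Fit}_A^0(M)$ is asserted to follow from ``the principality of both ideals'' --- and the principality of ${\rm char}_A(M)$ is precisely what you have not proved at that point. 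Your closing paragraph concedes this: you establish only that ${\rm char}_A(M)\cdot A_\fq$ is principal for $\fq\in\cP_{\!R}(M)$ (via the decomposition of $R_\fq[G]$ into DVRs), and you defer the global assembly to ``a careful reduction to the quadratically presented case via a suitable approximation'', which is not an argument: a general finitely presented torsion $A$-module admits no such approximation, and local principality at height-one primes does not imply global principality over a ring like $R[G]$ without further input. Note also that this is circular as a strategy, since the quadratically presented case you would reduce to is exactly the case whose global statement you have not yet secured.

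The paper closes this hole differently, and you need some version of its two inputs. First, it shows that \emph{every prime ideal in} $\cP_{\!A}(M)$ \emph{is itself a principal ideal of $A$ generated by a non-zero divisor}, because for $\fq\in\cP_{\!R}(M)$ the hypothesis $|G|\notin\fq$ makes $R_\fq[G]$ integrally closed, hence a finite product of semi-local Dedekind domains, i.e.\ principal ideal rings; granting this, ${\rm char}_A(M)=\prod_{\fp}\fp^{e_\fp}$ is principal as a finite product of principal ideals, with no appeal to quadratic presentations. Second, for the equality with the Fitting ideal it uses that both ${\rm char}_A(M)$ and $A\cdot\det\theta$ are free $A$-modules of rank one, hence free $R$-modules of rank $|G|$, so that the Krull property of $R$ gives $X=\bigcap_{\fq\in\cP_{\!R}}X_\fq$ for each of them; only then do the local equalities globalise. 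Your proposal mentions neither the global principality of the primes in $\cP_{\!A}(M)$ nor the intersection property over the Krull domain $R$, and without them both the principality assertion and the identity ${\rm char}_A(M)={\rm Fit}_A^0(M)$ remain unproved.
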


\begin{proof} The key point in the proof of claim (i)(a) is that, if $M$ is amenable, then for every $\fp$ in $\cP_A(M)$ the ring $A_\fp = A'_{\fp'}$ that occurs in the proof of  Theorem \ref{bourbaki-theorem}(ii)(a) is a discrete valuation ring. For each such $\fp$, therefore, the pseudo-isomorphism (\ref{pseudo choice}) implies that $\fp^{\sum_{i=1}^{i=n_\fp}e_{\fp,i}}A_\fp$ is equal to $\fp A_\fp$ raised to the power of the length of the $A_\fp$-module $M_\fp$, and hence implies the first assertion of (i)(a). For a similar reason, the existence of a pseudo-isomorphism  (\ref{pseudiosmorphism-display}) in which $N = 0$ (as $Q(A)$ is semisimple) implies that the second assertion of (i)(a) is true with $L$ taken to be the product $\prod_{\tau\in \mathcal{T}}L_\tau$. 

To prove claim (i)(b) we set $A := R[G]$  and write $f$ for the ring inclusion $R \to A$. Then the first assertion follows from the discussion in Examples \ref{tame-examples}(iii). In addition, in this case every prime ideal in $\cP_{\!A}(M)$ is principal and generated by a non-zero divisor since, for each $\fq$ in $\cP_{\!R}(M)$, the ring $R_\fq[G]$ is integrally closed (as $p \notin \fq$) and hence equal to a finite product of Dedekind domains that have only finitely many prime ideals (and so are principal ideal rings). Next we note that, for each $\fp$ in $\cP_{\!A}(M)$, the presentation (\ref{quadratic}) gives rise to an exact sequence of $A_\fp$-modules 
\begin{equation}\label{induced quadratic} A_\fp^d \xrightarrow{\theta_{\fp}} A_\fp^d \xrightarrow{\pi_\fp} M_\fp \to 0.\end{equation} 
In particular, since $M_\fp$ is a torsion module over the discrete valuation ring $A_\fp$, this sequence implies that, for any fixed pseudo-isomorphism (\ref{pseudo choice}), there are equalities    
\[ A_\fp\cdot \mathrm{det}(\theta) = \fp_\fp^{l_{\fp}( \mathrm{coker}(\theta_\fp))} = \fp_\fp^{l_{\fp}(M_\fp)} = \fp_\fp^{\sum_{i=1}^{i=n_\fp}e_{\fp,i}} = {\rm char}_A(M)_\fp. \]
Here we write $l_\fp(N)$ for the length of a finitely generated torsion $A_\fp$-module $N$ so that the second equality follows from (\ref{induced quadratic}) and the third from (\ref{pseudo choice}). 

These equalities imply that $(A\cdot \mathrm{det}(\theta))_\fq = {\rm char}_A(M)_\fq$ for all primes $\fq$ in $\cP_{\!R}$. In addition, the ideals ${\rm char}_{\!A}(M)$ and $A\cdot{\rm det}(\theta)$ are both free $A$-modules of rank one, and hence free $R$-modules of rank $|G|$ (in the first case this is true because, as observed earlier,  every prime in $\cP_A(M)$ is principal and generated by a non-zero divisor, and in the second case because, since $M$ is torsion, the exact sequence (\ref{quadratic}) implies ${\rm det}(\theta)$ is a non-zero divisor of $A$). One therefore has    
\begin{align*}\label{computation} {\rm Fit}_{\!A}^0(M) = A\cdot {\rm det}(\theta) = \bigcap_{\fq\in \cP_{\!R}} (A_\fq \cdot {\rm det}(\theta))_\fq =&\, \bigcap_{\fq\in \cP_{\!R}}{\rm char}_A(M)_\fq = \mathrm{char}_{A}(M),\notag\end{align*}
where the first equality follows directly from the definition of initial Fitting ideal (and (\ref{quadratic})) and the second and last equalities are true since $R$ is a Krull domain. This completes the proof of claim (i)(b). 
 
Turning to claim (ii), we note that the assertions regarding modules being torsion and finitely presented follow directly from the given exact sequence (and, in the latter case, the general result of \cite[Th. 2.1.2]{glaz}). In addition, for each prime ideal $\p$ of $A$, the given sequence induces a short exact sequence of $A_\p$-modules 
\[ 0 \to M_{1,\p} \to M_{2,\p} \to M_{3,\p} \to 0.\]
Assuming $M_2$ (or equivalently, both $M_1$ and $M_3$) to be torsion, these sequences imply an equality $\cP(M_2) = \cP(M_1) \cup \cP(M_3)$ that combines with Remark \ref{prime-avoidance-remark} to imply both of the assertions regarding amenability, and also combines with the observation made in the proof of claim (i)(a) to imply the stated inclusion, respectively equality, of characteristic ideals. 
\end{proof}

\subsection{Inverse limit rings} In this section we assume to be given an inverse system of rings 
\[ (A_n,\,\, \phi_n\!:\! A_n \to A_{n-1})_{n\in \mathbb{N}}\]
in which every homomorphism $\phi_n$ is surjective. We study the inverse limit ring 
\[ A := \varprojlim_n A_n.\]

For every $n$ we write $\phi_{\langle n\rangle}: A \to A_n$ for the induced projection map (so that $\phi_n\circ \phi_{\langle n\rangle} = \phi_{\langle n-1\rangle}$ for all $n$) and we use the decreasing separated filtration 
\[ I_\bullet := (I_n)_{n}\]
of $A$ that is obtained by setting $I_n := \ker(\phi_{\langle n\rangle})$ for every $n$. For an $A$-module $M$ and non-negative integer $n$ we define an $A_n$-module by setting   
\[ M_{(n)} := M/(I_n\cdot M) \cong (A/I_n)\otimes_A M \cong A_n\otimes_A M.\]
We say $M$ is `$I_\bullet$-complete' if the natural map 
\[ \mu_{M}: M \to \varprojlim_nM_{(n)}\]
is bijective, where the limit is with respect to the natural maps $\mu_{M,n}: M_{(n)}\to M_{(n-1)}$. 

\subsubsection{The general case} The following result records some useful general facts about the notion of $I_\bullet$-completeness. In this result we refer to the linear topology on $A$ induced by the subgroups $\{I_n\}_n$ as the `$I_\bullet$-topology'. 

\begin{lemma}\label{submodule-pro-residual} The following claims are valid for every $A$-module $M$. 

\begin{itemize}
\item[(i)] If $M$ is finitely generated, then $\mu_M$ is surjective but need not be injective.
\item[(ii)] $M$ is $I_\bullet$-complete if it is a finitely generated submodule of an $I_\bullet$-complete module. In particular, every  finitely generated ideal of $A$ is $I_\bullet$-complete. 
\item[(iii)] {Assume $M$ is $I_\bullet$-complete and there exists a natural number $a$ for which the $I_a$-adic topology on $A$ is finer than the $I_\bullet$-topology. Then $M$ is finitely generated if and only if $M_{(a)}$ is finitely generated.} 
\end{itemize}
\end{lemma}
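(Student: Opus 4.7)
For claim (i), the plan is to exploit the finite generation of $M$ by choosing a surjection $\pi : A^k \twoheadrightarrow M$, and then to compare $\mu_M$ with $\mu_{A^k}$ via the naturality diagram
\[
\xymatrix{
A^k \ar[d]_{\pi} \ar[r]^-{\mu_{A^k}}_-{\sim} & \varprojlim_n A_n^k \ar[d]^{\varprojlim_n \pi_{(n)}} \\
M \ar[r]^-{\mu_M} & \varprojlim_n M_{(n)}.
}
\]
Here $\mu_{A^k}$ is bijective because $A=\varprojlim_n A_n$ by definition and inverse limits commute with finite direct sums. The right vertical arrow is surjective provided the inverse system $K_\bullet$ with $K_n := \ker(\pi_{(n)} : A_n^k \to M_{(n)})$ has vanishing $\varprojlim^1$, for which the Mittag-Leffler criterion suffices. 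I would verify that the transitions $K_n \to K_{n-1}$ are surjective by a direct lifting argument: given $k \in K_{n-1}$, lift arbitrarily to $\tilde k \in A_n^k$ and observe that $\pi_{(n)}(\tilde k)$ lies in $(I_{n-1}/I_n) M_{(n)} = \ker(M_{(n)} \to M_{(n-1)})$; using the surjectivity of $\pi_{(n)}$, one can then correct $\tilde k$ by an element of $(I_{n-1}/I_n) A_n^k$ to land in $K_n$ while preserving the image in $K_{n-1}$. Surjectivity of $\mu_M$ follows by chasing the diagram. For the failure of injectivity, one exhibits an explicit example: take a cyclic module $M = A/J$ for some ideal $J$ that is not closed in the $I_\bullet$-topology, so that $\ker(\mu_M) = (\bigcap_n(I_n+J))/J \not= 0$; such ideals exist already in rings like $\ZZ_p[[\ZZ_p^\NN]]$.

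Claim (ii) is a short deduction from (i). Given a finitely generated submodule $M$ of an $I_\bullet$-complete $N$, surjectivity of $\mu_M$ is part (i), and injectivity follows from the containment $\ker(\mu_M) = \bigcap_n I_n M \subseteq \bigcap_n I_n N = \ker(\mu_N) = 0$. The final assertion is then immediate: the ring $A$ itself is $I_\bullet$-complete (tautologically, since $A = \varprojlim_n A_n = \varprojlim_n A_{(n)}$), so every finitely generated ideal of $A$ is a finitely generated submodule of an $I_\bullet$-complete module.

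For claim (iii), the forward implication is immediate, so I would focus on the converse. Lift generators $\bar m_1,\ldots,\bar m_k$ of $M_{(a)}$ to elements $m_1,\ldots,m_k \in M$ and set $M' := Am_1 + \cdots + Am_k$; by construction $M = M' + I_a M$. An iteration argument using $I_a M' \subseteq M'$ upgrades this to $M = M' + I_a^j M$ for all $j \ge 1$, and the topology hypothesis then gives $M = M' + I_n M$ for every $n$, i.e.\ $(M/M')_{(n)} = 0$ for all $n$. The plan is then to conclude $M = M'$ by taking inverse limits along the short exact sequence $0 \to M' \to M \to M/M' \to 0$: by hypothesis $M$ is $I_\bullet$-complete and by (ii) so is $M'$, while Mittag-Leffler (applied to the surjective transition system $M'_{(n)}$) ensures that $\varprojlim_n M'_{(n)} \to \varprojlim_n M_{(n)}$ is surjective. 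Identifying this surjection with the inclusion $M' \hookrightarrow M$ forces $M = M'$. The most delicate step is the Mittag-Leffler verification in (i), since the transition maps of the kernel system $K_\bullet$ are not formally given and must be produced by an explicit correction in $(I_{n-1}/I_n) A_n^k$; the rest is essentially diagram chasing once completeness has been transferred via (ii).
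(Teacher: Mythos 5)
Your treatment of claims (i) and (ii) follows essentially the same route as the paper: surjectivity of $\mu_M$ is obtained from a surjection $A^k \twoheadrightarrow M$ by showing that the kernel system $K_n = \ker(\pi_{(n)})$ has surjective transition maps (your explicit lifting-and-correction argument is exactly what the paper extracts from the Snake Lemma) and then passing to limits, while injectivity in (ii) comes from embedding $M$ in an $I_\bullet$-complete module. One point needs attention, though: for the assertion that $\mu_M$ ``need not be injective'', your reduction to an ideal $J$ that is not closed in the $I_\bullet$-topology (so that $\ker(\mu_{A/J}) = \bigl(\bigcap_n(I_n+J)\bigr)/J \neq 0$) is correct, but the existence of such an ideal is precisely the nontrivial content and cannot simply be asserted; the paper constructs one explicitly, namely the ideal $K$ of $A = \varprojlim_n \ZZ_p[[X_1,\dots,X_n]]$ generated by $pX_1$ and all $X_n - pX_{n+1}$, for which $X_1$ lies in $\bigcap_n(K+I_n)$ but not in $K$. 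You should supply such a construction.

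More seriously, the final step of your argument for (iii) is not justified as written. From $M = M' + I_nM$ for all $n$ you need surjectivity of $\varprojlim_n M'_{(n)} \to \varprojlim_n M_{(n)}$, and this does \emph{not} follow from Mittag--Leffler applied to the system $M'_{(n)}$: levelwise surjectivity together with surjective transition maps in the source system is insufficient in general (take the constant system $\ZZ$ mapping onto $\ZZ/p^n\ZZ$; the limit map $\ZZ \to \ZZ_p$ is not surjective). What is required is the vanishing of ${\varprojlim}^1$ for the kernel system $C_n := \ker(M'_{(n)} \to M_{(n)}) = (M' \cap I_nM)/I_nM'$ --- note that in (i) you correctly applied Mittag--Leffler to the kernels, whereas here you apply it to the source. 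Your route can be repaired: since $M = M' + I_nM$, one has $I_{n-1}M = I_{n-1}M' + I_{n-1}I_nM$, and so any $x \in M' \cap I_{n-1}M$ can be written $x = u + w$ with $u \in I_{n-1}M'$ and $w \in I_nM$, whence $w = x - u \in M' \cap I_nM$; this shows the transition maps $C_n \to C_{n-1}$ are surjective, so the limit argument goes through. The paper instead avoids inverse limits at this point, concluding by a successive-approximation (Nakayama-type) argument as in Matsumura's Theorem 8.4, using $I_a^m \subseteq I_n$ together with the completeness and separatedness of $M$ (and of $A$, for convergence of the coefficients); either route is fine once the Mittag--Leffler step is corrected as above.
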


\begin{proof} To prove claim (i) we fix a natural number $d$ for which there exists an exact sequence of $A$-modules of the form  
\begin{equation}\label{resolution} 0 \to K \xrightarrow{\subseteq} A^d \xrightarrow{\varphi} M \to 0.\end{equation}
For each $n$, we set $K'_n := \ker(\varphi_{(n)})$ and use the exact commutative diagram 
    
    \begin{equation*}\label{limit diagram 0}
        \xymatrix{ 
0 \ar[r] & K'_n \ar[r]^{\subseteq} \ar[d]^{\alpha_n} &A_n^{d} \ar[r]^{\varphi_{(n)}} \ar@{->>}[d]^{\phi_n^d} & M_{(n)} \ar[r] \ar@{->>}[d]^{\mu_{M,n}} &0\\
        0 \ar[r] &K'_{n-1} \ar[r]^{\subseteq} &A_{n-1}^{d} \ar[r]^{\varphi_{(n-1)}} &M_{(n-1)} \ar[r] &0.}
    \end{equation*}
Write $I_{[n]}$ for the image of $I_{n-1}$ in $A_n$. Then $\ker(\phi_n^d) = I_{[n]}^d$ and $\ker(\mu_{M,n}) = I_{[n]}\cdot M_{(n)}$. Thus, since each map $\phi_n^d$ is surjective, the Snake Lemma applies to the above diagram to imply that each map $\alpha_n$ is surjective. By passing to the limit over $n$ of these diagrams, and noting $A^d$ is $I_\bullet$-complete (as a direct consequence of the definition of $A$ as a limit), we thus obtain the bottom row of the exact commutative diagram

    \begin{equation}\label{limit-diagram}
        \begin{tikzcd}
            0 \arrow[r] &K \arrow[r] \arrow[d] &A^d \arrow[r, "\varphi"] \arrow[d, "\mathrm{id}"] &M \arrow[r] \arrow[d, "\mu_M"] &0\\
            0 \arrow[r] &\varprojlim_n K_n' \arrow[r] &A^d \arrow[r] \arrow[r] &\varprojlim_n M_{(n)} \arrow[r] &0.
        \end{tikzcd}
    \end{equation}
This diagram implies $\mu_M$ is surjective. To give an example in which $\mu_M$ is not injective we take $A_n$ to be the  power series ring $\ZZ_p[[X_1,\dots, X_n]]$ over $\ZZ_p$ in $n$ commuting indeterminates $X_i$ and $\phi_n$ to be the projection map $A_n\to A_{n-1}$ induced by sending $X_n$ to $0$. In this case $A$ identifies with one version (see \cite{chang}) of the power series ring over $\ZZ_p$ in a countable number of commuting indeterminates $\{X_i\}_{i \in \mathbb{N}}$. We then define $K$ to be the ideal of $A$ generated by the set $\{pX_1\} \cup \{ X_n-pX_{n+1}\}_{n \in \NN}$ and $M$ to be the quotient $A/K$. In this case one computes that $\varprojlim_n K_n' = A$ and so the above diagram (with $d=1$) implies $\mu_M$ is not injective. 

To prove the first assertion of claim (ii) we fix an injective map $\theta: M \to N$ in which $N$ is $I_\bullet$-complete. It is then enough to note that $\mu_M$ is injective as a consequence of the diagram 
    \begin{center}
        \begin{tikzcd}
            M \arrow[r, hookrightarrow, "\theta"] \arrow[d, "\mu_M"] &N \arrow[d, "\mu_N"]\\
            \varprojlim_n M_{(n)}  \arrow[r, "(\theta_{(n)})_n"] & \varprojlim_n N_{(n)} 
        \end{tikzcd}
    \end{center}
    and the fact that $\mu_N$ is injective. The second assertion of claim (ii) is then obvious. 
    
{The hypothesis on $a$ in claim (iii) implies that, for every $n \in \mathbb{N}$, one has $(I_a)^m \subseteq I_n$ for some $m \in \mathbb{N}$. Given this, and the fact $M$ is $I_\bullet$-complete, the stated result follows by a standard Nakayama's Lemma type argument (as in the proof of \cite[Th. 8.4]{matsumura}).} 
\end{proof} 

\subsubsection{The compact case}\label{compact section} In the sequel we say that the inverse limit $A$ is `compact' if each ring $A_n$ is endowed with a compact topology with respect to which the transition maps $\phi_n$ are continuous. In this case we endow $A$ with the corresponding inverse limit topology, so that $A$ is compact and, for every $n$, the ideal $I_n$ is closed and the projection map $\phi_{\langle n\rangle}$ is continuous.  

In particular, since $A$ is compact, the inverse limit functor is exact on the category of finitely generated $A$-modules and this fact allows us to prove a finer version of Lemma \ref{submodule-pro-residual}. 

Before stating the result, we note that if an $A$-module $N$ is pseudo-null, then the associated $A_n$-module $N_{(n)}$ need not even be torsion. Such issues mean that, in general, one cannot hope to compute the characteristic ideal of a finitely presented torsion $A$-module $M$ directly in terms of the associated $A_n$-modules $M_{(n)}$. 

Despite this difficulty, claim (iii) of the following result shows that such a reduction is possible for a natural family of compact rings $A$, at least after possibly replacing $M$ by a pseudo-isomorphic module. (In Proposition \ref{quadratic connection} below we will also prove a more concrete version of this result for certain power series rings.)

\begin{proposition}\label{compact prop} Assume $A$ is compact. Then the following claims are valid for any finitely presented $A$-module $M$. 
\begin{itemize}
\item[(i)] $M$ is $I_\bullet$-complete. 
\item[(ii)] If $M$ is an amenable torsion module, then $\mathrm{char}_{A}(M) =\varprojlim_n \phi_{\langle n\rangle }(\mathrm{char}_{A}(M))$, where the limit is taken with respect to the maps $\phi_n$. 
\item[(iii)] Assume that $A$, and also $A_n$ for every $n$, are unique factorisation domains and that $M$ is torsion. 
Then $M$ is pseudo-isomorphic to a finitely presented torsion $A$-module $\widetilde M$ that is $I_\bullet$-complete and such that  
\[ \mathrm{char}_{A}(M) = \varprojlim_n {\rm char}_{A_n}(\widetilde M_{(n)}),\]
where the limit is taken with respect to the maps $\phi_n$ and the $A_n$-modules $\widetilde M_{(n)}$ are torsion for all sufficiently large $n$.
\end{itemize}
\end{proposition}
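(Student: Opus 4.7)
My plan is to prove the three claims sequentially, with each leaning on the previous, and I expect the main obstacle to lie in coordinating the reductions in claim (iii).

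For claim (i), I would start from a presentation $0 \to K \to A^d \to M \to 0$ with $K$ finitely generated (which exists since $M$ is finitely presented), together with the exact commutative diagram (\ref{limit-diagram}) from the proof of Lemma \ref{submodule-pro-residual}(i). Since $K$ is finitely generated it is the continuous image of a compact $A^m$, hence compact and so closed in $A^d$ for the $I_\bullet$-topology; but the closure of $K$ in $A^d$ coincides with $\varprojlim_n K'_n$, so $K = \varprojlim_n K'_n$ as submodules of $A^d$. Combined with the exactness of $\varprojlim$ on the given short exact sequence (a consequence of the compactness of $A$, which makes the first derived inverse limit vanish on inverse systems of finitely generated $A_n$-modules with surjective transitions), a diagram chase with middle vertical $\mathrm{id}$ then forces $\mu_M$ to be bijective.

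For claim (ii), my plan is to use amenability of $M$ to express $\mathrm{char}_A(M)$ as the finite product $\prod_{\fp \in \cP_A(M)} \fp^{\sum_i e_{\fp,i}}$ of finitely generated ideals. The ideal $J := \mathrm{char}_A(M)$ is then itself finitely generated, and, by the same compactness argument as in (i), is closed in $A$ for the $I_\bullet$-topology. Since $\varprojlim_n \phi_{\langle n\rangle}(J)$, viewed as a subset of $A = \varprojlim_n A_n$, is precisely the closure of $J$ in $A$, we conclude $J = \varprojlim_n \phi_{\langle n\rangle}(J)$.

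For claim (iii), the UFD hypothesis makes $A$ amenable (Examples \ref{tame-examples}(ii)) with each height-one prime principal, say $\fp = (f_\fp)$ for a prime element $f_\fp \in A$. Applying Theorem \ref{bourbaki-theorem}(ii)(a) gives a pseudo-isomorphism $M \to \widetilde M := \bigoplus_\fp \bigoplus_i A/(f_\fp^{e_{\fp,i}})$; this $\widetilde M$ is a finite direct sum of cyclic torsion modules, hence finitely presented and torsion, hence $I_\bullet$-complete by (i), and one has $\mathrm{char}_A(\widetilde M) = \mathrm{char}_A(M)$ by invariance of the characteristic ideal under pseudo-isomorphism. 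Since each $f_\fp$ is nonzero in $A = \varprojlim_n A_n$, for $n$ beyond finitely many thresholds one has $\phi_{\langle n\rangle}(f_\fp) \neq 0$ for every $\fp$ occurring, so $\widetilde M_{(n)} = \bigoplus_\fp \bigoplus_i A_n/(\phi_{\langle n\rangle}(f_\fp)^{e_{\fp,i}})$ is a torsion $A_n$-module whose characteristic ideal, by Proposition \ref{char-lemma}(ii)(b) applied to the direct sum and to each cyclic summand, equals $\phi_{\langle n\rangle}(\mathrm{char}_A(\widetilde M))$. The required identity then follows on applying (ii) to the amenable module $\widetilde M$. The delicate point I anticipate is that pseudo-isomorphism is not preserved under reduction modulo $I_n$ (a pseudo-null module can reduce to a non-torsion $A_n$-module, as the paper explicitly warns), which is why one must work with the cyclic model $\widetilde M$ from the outset rather than with $M$ itself; without this substitution, neither the torsion property of the reductions nor the multiplicative formula for their characteristic ideals is accessible.
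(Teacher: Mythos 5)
Your proposal is correct and follows essentially the same route as the paper's proof: establish claim (i) from a finite presentation together with the diagram of Lemma \ref{submodule-pro-residual}, deduce claim (ii) from the finite generation of the characteristic ideal, and prove claim (iii) by first replacing $M$ with the cyclic model $\widetilde M$ supplied by Theorem \ref{bourbaki-theorem}(ii)(a) (precisely to avoid the failure of pseudo-nullity under reduction) and then applying claim (ii). The only differences are cosmetic: you obtain $K=\varprojlim_n K'_n$ via compactness and closedness in the $I_\bullet$-topology where the paper invokes Lemma \ref{submodule-pro-residual}(ii) and exactness of inverse limits of compact modules, and you compute ${\rm char}_{A_n}(\widetilde M_{(n)})$ by multiplicativity (Proposition \ref{char-lemma}(ii)(b)) over the cyclic summands where the paper localises at the height-one primes of the Krull domain $A_n$.
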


\begin{proof} To prove claim (i) we fix an exact sequence of $A$-modules of the form (\ref{resolution}). Then, by assumption the $A$-module $K$ is finitely generated and thus, by Lemma \ref{submodule-pro-residual}(ii), $I_\bullet$-complete. Hence, by passing to the limit over $n$ of the induced exact sequences of (compact) $A_n$-modules $K_{(n)} \to A^d_n \to M_{(n)} \to 0$ one obtains an exact sequence of $A$-modules 
\[ 0 \to K \xrightarrow{\subseteq} A^d \to \varprojlim_n M_{(n)} \to 0.\]
Comparing this to (\ref{resolution}) one deduces the map $\mu_M$ is bijective, as required to prove claim (i). 

In the rest of the argument we assume that $M$ is torsion. Then, since ${\rm char}_A(M)$ is a finitely generated ideal of $A$ (cf. condition (P$_2$) in Definition \ref{tame-definition}), to prove claim (ii) it is enough to show that any finitely generated ideal $N$ of $A$ is equal to $\varprojlim_n\phi_{\langle n\rangle}(N)$, where the limit is taken with respect to the maps $\phi_n$. To see this, we note that the above argument (with $M = A/N$, $d = 1$ and $K = N$) implies that the map $\mu_{A/N}$ is bijective. The stated equality then follows from the corresponding exact commutative diagram (\ref{limit-diagram}) and the fact that, in this case, one has $K_n' = \phi_{\langle n\rangle}(N)$ for every $n$.   

To prove claim (iii) we recall (from Theorem \ref{bourbaki-theorem}(ii)(a)) that $M$ is pseudo-isomorphic as an $A$-module to a finite direct sum $\widetilde M := \bigoplus_{\tau \in \mathcal{T}}A/L_\tau$, where each $L_\tau$ is a principal ideal of $A$. In particular, since each $L_\tau$ is principal, $\widetilde M$ is finitely presented as an $A$-module and hence $I_\bullet$-complete by claim (i). In addition, one has    
\begin{equation}\label{key limit} {\rm char}_A(M) = {\rm char}_A(\widetilde M) = \prod_{\tau \in \mathcal{T}}L_\tau = \varprojlim_n \prod_{\tau \in \mathcal{T}}\phi_{\langle n\rangle}(L_\tau),\end{equation}
where the second equality follows from the argument of Proposition \ref{char-lemma}(i)(b) (with $G$ trivial) and the fact that all ideals in $\cP_{\!A}$ are principal (as $A$ is assumed to be a unique factorisation domain), and the last equality follows from claim (ii). 

Now, for each $n$, one has 
\[ \widetilde M_{(n)} = \bigoplus_{\tau \in \mathcal{T}}\bigl(A/L_\tau\bigr)_{(n)} \cong \bigoplus_{\tau \in \mathcal{T}}N_{\tau,n}\]
with $N_{\tau,n} := A_n/\phi_{\langle n\rangle}(L_\tau)$. These $A_n$-modules are finitely presented and, for any sufficiently large $n$, also torsion. In particular, since $A_n$ is a Krull domain, 
for any such $n$, and every $\mathfrak{q}$ in $\mathcal{P}_{\!A_n}$, one has 
\begin{align*} \bigl(\prod_{\tau \in \mathcal{T}}\phi_{\langle n\rangle}(L_\tau)\bigr)_\mathfrak{q} = \prod_{\tau \in \mathcal{T}}\phi_{\langle n\rangle}(L_\tau)_\mathfrak{q} 
=&\, (\mathfrak{q}\!\cdot\! A_{n,\mathfrak{q}})^{\sum_{\tau \in \mathcal{T}}l_\mathfrak{q}(N_{n,\tau})}\\
=&\, (\mathfrak{q}\!\cdot\! A_{n,\mathfrak{q}})^{l_\mathfrak{q}(\widetilde M_{(n)})}  = {\rm char}_{A_n}(\widetilde M_{(n)})_\mathfrak{q}.\end{align*} 
The principal $A_n$-ideals $\prod_{\tau \in \mathcal{T}}\phi_{\langle n\rangle}(L_\tau)$ and ${\rm char}_{A_n}(\widetilde M_{(n)})$ are therefore equal and so claim (iii) follows directly from (\ref{key limit}).  \end{proof}

\section{Weil-\'etale cohomology for curves over finite fields}

In this section we describe an application of the above results to the Iwasawa theory of curves over finite fields. 

We write $\mathcal{U}(G)$ for the set of subgroups of finite index of a profinite group $G$. 

\subsection{Galois groups and power series rings}\label{gg section} 
The Iwasawa algebra of $\ZZ_p^\NN$ over $\ZZ_p$ is the completed $p$-adic group ring 
    \begin{align*}\label{cgr}
        \ZZ_p[[\ZZ_p^\NN]] := \varprojlim_{U\in \mathcal{U}(\ZZ_p^\NN)} \ZZ_p[\ZZ_p^\NN/U],
    \end{align*}
    where the limit is taken respect to the natural projection maps. After fixing a $\ZZ_p$-basis $\{\gamma_i\}_{i\in \mathbb{N}}$ of $\ZZ_p^\NN$, the association $X_i\mapsto \gamma_i-1$ induces a (non-canonical) isomorphism of rings between $\ZZ_p[[\ZZ_p^\NN]]$ and the power series ring 
\begin{equation*}\label{psr notation} R := \varprojlim_n R_n \quad \text{with} \quad 
R_n := \ZZ_p[[X_1,\dots, X_n]]\end{equation*}
in commuting indeterminants $\{X_i\}_{i \in \NN}$. Here the inverse limit is taken with respect to the (surjective) $\ZZ_p$-linear ring homomorphisms 
\[ \rho_n: R_n \twoheadrightarrow R_{n-1}\]
that send $X_i$ to $X_i$ if $1\le i < n$ and to $0$ if $i = n$. For each $n$ we also use the maps 
\[ \iota_n : R_n \hookrightarrow R\quad \text{and}\quad \rho_{\langle n\rangle}: R \twoheadrightarrow R_n,\]
that are respectively the natural inclusion and the (surjective) $\ZZ_p$-linear ring homomorphism that sends $X_i$ to $X_i$ if $1\le i\le n$ and to $0$ if $i > n$ (so that the pair $(\iota_n,\rho_{\langle n\rangle})$ is a retract of rings and, for each $n>1$, one has $\rho_n\circ \rho_{\langle n\rangle} = \rho_{\langle n-1\rangle}$). 

We also fix a finite abelian group $G$ and consider the group rings 
\[ A := R[G] \quad \text{and}\quad A_n = R_n[G],\]
together with the maps $A_n \to A_{n-1}, A_n \to A$ and $A \to A_n$ that are respectively induced by $\rho_n, \iota_n$ and $\rho_{\langle n\rangle}$ (and which we continue to denote by the same notation). 

We then define a separated decreasing filtration $I_\bullet = (I_n)_n$ of $A$ by setting 
\[ I_n := \ker(\rho_{\langle n\rangle})\]
for each $n$, and we note that $A$ is $I_\bullet$-complete.  

Since the submodule of $I_n$ that is generated by $\{X_i\}_{i > n}$ is not finitely generated, the ring $A$ is not Noetherian (cf. Remark \ref{glaz rem} below) and its module theory is complicated. For instance, the example discussed in the proof of Lemma \ref{submodule-pro-residual}(i) shows that cyclic $A$-modules need not be $I_\bullet$-complete (or even pro-finite) and also, taking account of a result of Fujiwara et al \cite[Th. 4.2.2]{fujiwara-gabber-kato}, that $A$ does not have the weak Artin-Rees property relative to $p$. Nevertheless, claims (i) and (ii) of the following result ensure that the theory developed in \S\ref{structure section} can be applied in this setting.   

\begin{lemma}\label{A prelim result} For every $n$ the following claims are valid.
\begin{itemize}  
\item[(i)] The rings $R$ and $R_n$ are $p$-adically complete unique factorisation domains, and hence amenable (in the sense of \S\ref{char sec}). 
\item[(ii)] The ring $A$ is $p$-adically complete and compact (in the sense of \S\ref{compact section}) and is amenable if and only if $p$ does not divide $|G|$. 
%
\item[(iii)] If $\mathfrak{p}$ is a prime ideal of $A_n$, then $\iota_n(\mathfrak{p})A$ is a prime ideal of $A$. 
\end{itemize}
\end{lemma}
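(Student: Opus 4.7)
For part (i), my plan is to observe that each $R_n$ is complete with respect to its maximal ideal $(p, X_1, \dots, X_n)$, hence $p$-adically complete, and is a regular Noetherian local ring, so a unique factorisation domain by the Auslander--Buchsbaum theorem. The $p$-adic completeness of $R = \varprojlim_n R_n$ would then follow by checking that the natural map $R/p^k R \to \varprojlim_n R_n/p^k R_n$ is an isomorphism for each $k$ (using that each $R_n$ is $p$-torsion-free, which allows successive lifting of compatible systems), and then commuting inverse limits. For the unique factorisation property of $R$, which is the main non-trivial input, I would appeal to Chang's result \cite{chang} for power series rings in countably many variables. Amenability of both rings then follows at once from Examples \ref{tame-examples}(ii).

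For part (ii), I would endow each $R_n$ with its $(p, X_1, \dots, X_n)$-adic topology, making it pro-finite (since $R_n/(p, X_1, \dots, X_n)^k$ is finite for every $k$); consequently $A_n = R_n[G]$ is pro-finite and compact, the transition maps $\rho_n$ are continuous, and $A = \varprojlim_n A_n$ is compact in the inverse limit topology. The $p$-adic completeness of $A$ is transferred from that of $R$ via the finite free $R$-module structure on $A = R[G]$. For the amenability dichotomy, I would invoke Examples \ref{tame-examples}(iii): the ``only if'' direction is the explicit counterexample $M = A/Ap$ recorded there; for ``if'', one observes that $p \nmid |G|$ forces $|G|$ to be a unit in $R$, so no prime in $\cP_R$ can contain $|G|$, whereupon the amenability of $R$ (from (i)) propagates to $A$ via the criterion at the end of Examples \ref{tame-examples}(iii).

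For part (iii), the strategy is to show that $M := A/\iota_n(\mathfrak{p})A$ is a domain. Since $A_n$ is Noetherian, $\mathfrak{p}$ is finitely generated, so $\iota_n(\mathfrak{p})A$ is a finitely generated ideal and $M$ is a finitely presented $A$-module. Proposition \ref{compact prop}(i) then yields $M = \varprojlim_m M_{(m)}$, where for $m \geq n$ the composite $\rho_{\langle m\rangle}\circ \iota_n$ is the natural inclusion $A_n \hookrightarrow A_m$. Using the identification $A_m = A_n[[X_{n+1}, \dots, X_m]]$ (valid because $G$ is finite) together with a coefficient-wise description of the extended ideal, one finds
\[
    M_{(m)} \cong (A_n/\mathfrak{p})[[X_{n+1}, \dots, X_m]],
\]
which is a domain since $A_n/\mathfrak{p}$ is a Noetherian domain.

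The remaining step, and the one I expect to be the main technical point, is to show that this inverse limit of domains is itself a domain when the transition maps send $X_m$ to $0$. My plan is to observe that for any non-zero $a = (a_m)_m$ in the limit, the set $\{m \geq n : a_m \neq 0\}$ is upward closed, since surjectivity of the transitions forces $a_{m+1} \neq 0$ whenever $a_m \neq 0$, so this set is a terminal segment $[m_0, \infty)$ of $\{n, n+1, \dots\}$. If $ab = 0$ with both $a, b$ non-zero, then the analogous sets for $a$ and $b$ have non-empty intersection, which yields some $m$ with $a_m, b_m$ both non-zero in the domain $M_{(m)}$, contradicting $a_m b_m = 0$. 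Hence $M$ is a domain and $\iota_n(\mathfrak{p})A$ is prime.
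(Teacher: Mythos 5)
Your proposal is correct and follows essentially the same route as the paper: part (i) rests on the unique factorisation property of $R$ taken from the literature plus Examples \ref{tame-examples}(ii), part (ii) on a compact (profinite) topology on the rings $A_n$ and the criterion of Examples \ref{tame-examples}(iii), and part (iii) on finite generation of $\iota_n(\mathfrak{p})A$, Proposition \ref{compact prop}(i), the identification $M_{(m)}\cong (A_n/\mathfrak{p})[[X_{n+1},\dots,X_m]]$ and the fact that an inverse limit of domains over a chain is a domain. Two minor corrections: the unique factorisation property of $R$ should be attributed to Nishimura \cite{nishimura} (Chang \cite{chang} is invoked in the paper only to identify the inverse limit with a version of the power series ring in countably many indeterminates), and in your limit-of-domains argument the upward closedness of $\{m : a_m \neq 0\}$ follows from compatibility of the components alone, so the appeal to surjectivity of the transition maps is unnecessary.
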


\begin{proof} The first assertion of claim (i) is classical in the case of $R_n$ and then follows from the general result of Nishimura \cite[Th. 1]{nishimura} in the case of $R$. Given the latter fact, the second assertion of claim (i) follows directly from Remark \ref{tame-examples}(iii). 

Next we note that, for each subgroup $U$ in $\mathcal{U}(\ZZ_p^\NN)$ the group ring $\ZZ_p[(\ZZ_p^\NN/U) \times G]$ is finitely generated over $\ZZ_p$ and hence compact with respect to the canonical $p$-adic topology. The (inverse limit) ring  $\ZZ_p[[\ZZ_p^\mathbb{N}\times G]]$ is therefore compact with respect to the induced inverse limit topology. This induces a compact topology on $A$ that is independent of the choice of  $\ZZ_p$-basis $\{\gamma_i\}_{i\in \mathbb{N}}$ and such that each ideal $I_n$ is closed. This proves the first two assertions of claim (ii) and then, since $R$ is amenable (by claim (i)), the final assertion  follows from the discussion in Examples \ref{tame-examples}(iii).   

To prove claim (iii) we note that $\mathfrak{P} := \iota_n(\p)A$ is a finitely generated ideal of $A$, and hence that the 
map $\mu_{A/\mathfrak{P}}$ is bijective by Proposition \ref{compact prop}(i). Since, for  $m>n$, the image of the natural map $\mathfrak{P}_{(m)} \to A_{(m)} = A_m$ is $\rho_{\langle m\rangle}(\mathfrak{P}) = \p[[X_{n+1},\dots , X_m]]$, these observations combine to give a composite ring isomorphism 
    \begin{align*}
        A/\mathfrak{P} \xrightarrow{\mu_{A/\mathfrak{P}}} \varprojlim_{m > n} \bigl(A/\mathfrak{P}\bigr)_{(m)} \cong \varprojlim_{m > n} A_m/\rho_{\langle m\rangle}(\mathfrak{P}) \cong \varprojlim_{m>n} (A_n/\p)[[X_{n+1}, \cdots, X_m]].
    \end{align*}
Hence, since each ring $(A_n/\p)[[X_{n+1}, \cdots, X_m]]$ is a domain, the limit is a domain and so $\mathfrak{P}$ is a prime ideal of $A$.\end{proof}

\begin{remark}\label{glaz rem} Since $R$ is a unique factorization domain, it is a finite conductor ring in the sense of Glaz \cite{glaz-fcd} (so that every ideal with at most two generators is finitely presented). However, as far as we are aware, it is not known whether  $R$ is a coherent ring. \end{remark}

\begin{remark} Every prime ideal of $R$ that is principal has height one (since if a generating element $x$ does not belong to any prime ideal in $\cP_{\!R}$, then $x^{-1}$ belongs to $R_\mathfrak{q}$ for every $\mathfrak{q}$ in $\cP_{\!R}$ and hence to $R = \bigcap_{\mathfrak{q}\in \cP_{\!R}}R_\mathfrak{q}$). Lemma \ref{A prelim result}(iii) therefore implies that $\iota_n(\mathfrak{p})R$ belongs to $\cP_{\!R}$ if $\fp$ belongs to $\cP_{\!R_n}$. This observation is a special case of a result of Gilmer \cite[Th. 3.2]{gilmour} and is also related to the second part of \cite[Prop. 2.3]{bbl}. \end{remark}

The following result proves a more concrete version of Proposition \ref{compact prop}(iii) in this case and shows that, for a natural class of torsion $A$-modules, our characteristic ideals coincide with the `pro-characteristic ideals' that are introduced by Bandini et al in \cite{bbl}.

\begin{proposition}\label{quadratic connection} Let $M$ be a quadratically presented amenable torsion $A$-module such that, for every $n$, the $A_n$-module $M_{(n)}$ is amenable. Then the following claims are valid. 

\begin{itemize}
\item[(i)] For any natural number $n$ for which the $A_n$-module 
    $M_{(n)}$ is torsion, the $A_n$-module $(M_{(n+1)})^{X_{n+1}=0}$ is pseudo-null. 
\item[(ii)] The pro-characteristic ideal (in the sense of \cite[Def. 1.3]{bbl}) of the $A$-module $\varprojlim_nM_{(n)}$ is equal to ${\rm char}_{\!A}(M)$. 
\end{itemize}
\end{proposition}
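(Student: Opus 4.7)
The plan is to derive both parts directly from the given quadratic presentation $A^d \xrightarrow{\theta} A^d \to M \to 0$ and the Fitting-ideal description of the characteristic ideal supplied by Proposition \ref{char-lemma}(i)(b).

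For part (i), I would first base-change along $A \to A_{n+1}$ to obtain a quadratic presentation $A_{n+1}^d \xrightarrow{\theta_{(n+1)}} A_{n+1}^d \to M_{(n+1)} \to 0$ with $\rho_{n+1}(\det\theta_{(n+1)}) = \det\theta_{(n)}$. Because $M_{(n)}$ is torsion and $Q(A_n)$ is semisimple (cf.\ Lemma \ref{A prelim result} and the surrounding discussion), $\det\theta_{(n)}$ must be a non-zero-divisor in $A_n$. A short lifting argument, using the $X_{n+1}$-adic separation of $A_{n+1}$ together with the fact that $X_{n+1}$ is itself a non-zero-divisor there, then promotes this to the statement that $\det\theta_{(n+1)}$ is a non-zero-divisor in $A_{n+1}$, so that $\theta_{(n+1)}$ is injective. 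I would then compute $(M_{(n+1)})^{X_{n+1}=0}$ via a Tor long exact sequence: resolving $A_n$ over $A_{n+1}$ by $0 \to A_{n+1} \xrightarrow{X_{n+1}} A_{n+1} \to A_n \to 0$ and applying $-\otimes_{A_{n+1}} M_{(n+1)}$ identifies $\mathrm{Tor}_1^{A_{n+1}}(A_n,M_{(n+1)})$ with $(M_{(n+1)})^{X_{n+1}=0}$, whereas computing the same Tor via the free resolution $0\to A_{n+1}^d \xrightarrow{\theta_{(n+1)}} A_{n+1}^d \to M_{(n+1)} \to 0$ identifies it with $\ker(\theta_{(n)})$. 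The latter vanishes because $\theta_{(n)}$ is injective, so $(M_{(n+1)})^{X_{n+1}=0}$ is in fact zero, and in particular pseudo-null.

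For part (ii), applying Proposition \ref{char-lemma}(i)(b) to $M$ yields $\mathrm{char}_A(M) = A\cdot\det(\theta)$. For all sufficiently large $n$, the projection $\rho_{\langle n\rangle}(\det\theta) = \det\theta_{(n)}$ is a non-zero-divisor in $A_n$, so $M_{(n)}$ is itself a quadratically presented amenable torsion $A_n$-module (amenability being part of the hypothesis), and a second application of Proposition \ref{char-lemma}(i)(b) gives $\mathrm{char}_{A_n}(M_{(n)}) = A_n \cdot \det\theta_{(n)} = \rho_{\langle n\rangle}(A\cdot\det\theta)$. Combining this with Proposition \ref{compact prop}(ii) produces
\[
\mathrm{char}_A(M) \,=\, A\cdot \det\theta \,=\, \varprojlim_n \rho_{\langle n\rangle}\bigl(A\cdot\det\theta\bigr) \,=\, \varprojlim_n \mathrm{char}_{A_n}(M_{(n)}),
\]
and this last inverse limit is exactly the pro-characteristic ideal of $\varprojlim_n M_{(n)}$ in the sense of \cite[Def.~1.3]{bbl}, noting that $\varprojlim_n M_{(n)} = M$ by the $I_\bullet$-completeness provided by Proposition \ref{compact prop}(i).

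The main technical hurdle I anticipate is the assertion that $\rho_{\langle n\rangle}(\det\theta)$ is a non-zero-divisor in $A_n$ for all large $n$ -- equivalently, that $M_{(n)}$ is itself a torsion $A_n$-module for all large $n$. This requires a careful analysis of how non-zero-divisors of $A$ project into $A_n$, tracking the factor structure of the semisimple total quotient rings $Q(A)$ and $Q(A_n)$ (an issue already implicit in the proof of Proposition \ref{compact prop}(iii), where the analogous fact is used for the pseudo-isomorphic module $\widetilde M$). Aligning the resulting inverse limit with the precise definition of pro-characteristic ideal from \cite{bbl} is a secondary bookkeeping concern but should follow directly from the principality established above.
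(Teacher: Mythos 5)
You have this right, and your treatment of claim (i) is genuinely different from the paper's. The paper does not argue homologically: after noting (as you do) that torsionness of $M_{(n)}$ lifts to $M_{(n+1)}$, it invokes the characteristic-ideal formula of \cite[Prop. 2.10]{bbl} (cf. \cite[Lem. 4]{pr}) to write ${\rm char}_{A_n}\bigl((M_{(n+1)})^{X_{n+1}=0}\bigr)\cdot\rho_{n+1}\bigl({\rm char}_{A_{n+1}}(M_{(n+1)})\bigr) = {\rm char}_{A_n}(M_{(n)})$, identifies the characteristic ideals with Fitting ideals via Proposition \ref{char-lemma}(i)(b) and base change of Fitting ideals, and then cancels the non-zero-divisor generator to conclude ${\rm char}_{A_n}\bigl((M_{(n+1)})^{X_{n+1}=0}\bigr) = A_n$, whence pseudo-nullity. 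Your Koszul/Tor argument --- using that $\det\theta_{(n)}$, and hence its lift $\det\theta_{(n+1)}$, is a non-zero-divisor, so that $0\to A_{n+1}^d\xrightarrow{\theta_{(n+1)}}A_{n+1}^d\to M_{(n+1)}\to 0$ is a free resolution and $(M_{(n+1)})^{X_{n+1}=0}\cong {\rm Tor}_1^{A_{n+1}}(A_n,M_{(n+1)})\cong\ker\theta_{(n)}=0$ --- is correct, avoids any appeal to \cite{bbl}, and in fact proves the stronger statement that the invariant module vanishes outright rather than merely being pseudo-null. For claim (ii) your route coincides with the paper's: both apply Proposition \ref{char-lemma}(i)(b) over $A$ and over $A_n$ to get ${\rm char}_{A_n}(M_{(n)}) = \rho_{\langle n\rangle}\bigl({\rm char}_A(M)\bigr)$, and then use Proposition \ref{compact prop}(i),(ii) to recover ${\rm char}_A(M)$ as the inverse limit, i.e.\ as the pro-characteristic ideal of $M\cong\varprojlim_n M_{(n)}$.

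The one step you flag but do not carry out --- that $\rho_{\langle n\rangle}(\det\theta)$ is a non-zero-divisor of $A_n$, equivalently that $M_{(n)}$ is a torsion $A_n$-module, for all sufficiently large $n$ --- is indeed needed for (ii) (the paper relies on it tacitly, writing ${\rm char}_{A_n}(M_{(n)})$ for every $n$), but it is true and admits a short proof: since $Q(A)$ is semisimple, $A=R[G]$ embeds into the product, over the $\QQ_p$-conjugacy classes of characters $\chi$ of $G$, of domains of the form $R_\chi:=\varprojlim_n\cO_\chi[[X_1,\dots,X_n]]$ (with $\cO_\chi$ the $\ZZ_p$-algebra generated by the values of $\chi$); an element of $A$ is a non-zero-divisor precisely when all of its components are non-zero, and a non-zero element of $R_\chi$ has non-zero image in $\cO_\chi[[X_1,\dots,X_n]]$ for all large $n$, after which your constant-term observation propagates torsionness to all higher levels. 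For claim (i) as stated no such argument is required, since torsionness of $M_{(n)}$ is hypothesised there.
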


\begin{proof} The given hypotheses imply that the $A_{n+1}$-module $M_{(n+1)}$ is torsion and that $M_{(n+1)}$ and $M_{(n)}$ are both quadratically presented (over $A_{n+1}$ and $A_n$ respectively). Hence there are  equalities of $A_n$-ideals 

\begin{align*} {\rm char}_{A_n}\bigl((M_{(n+1)})^{X_{n+1}=0}\bigr)\cdot\rho_{n+1}\bigl({\rm char}_{A_{n+1}}(M_{(n+1)})\bigr) =&\, {\rm char}_{A_{n}}(M_{(n)})\\
                                           =&\, {\rm Fit}_{A_n}^0(M_{(n)})\\
                                           =&\, \rho_{n+1}\bigl({\rm Fit}_{A_{n+1}}^0(M_{(n+1)})\bigr) \\
                                           =&\, \rho_{n+1}\bigl({\rm char}_{A_{n+1}}(M_{(n+1)})\bigr). \end{align*}
Here the second and last equalities follow from Proposition \ref{char-lemma}(i)(b) (and Lemma \ref{A prelim result}(i)) and, given the identification $(M_{(n+1)})_{(n)} = M_{(n)}$, the first equality follows from the 
general result of \cite[Prop. 2.10]{bbl} (see also \cite[Lem. 4]{pr}) and the third from a standard property of Fitting ideals under scalar extension. In addition, since $M_{(n)}$ is a quadratically presented torsion 
$A_{n}$-module, the ideal ${\rm Fit}^0_{A_{n}}\bigl(M_{(n)}\bigr)$, and hence also, by the above equalities, $\rho_{n+1}\bigl({\rm char}_{A_{n+1}}(M_{(n+1)})\bigr)$, is principal and generated by a non-zero divisor. The above equalities therefore imply that ${\rm char}_{A_{n}}\bigl((M_{(n+1)})^{X_{n+1}=0}\bigr) = A_n$, and hence that 
$(M_{(n+1)})^{X_{n+1}=0}$ is a pseudo-null $A_{n}$-module, as claimed. 

In a similar way, for every $n$ Proposition \ref{char-lemma}(i)(b) implies that  
\[ {\rm char}_{A_{n}}(M_{(n)}) = {\rm Fit}_{A_n}^0(M_{(n)}) = \rho_{\langle n\rangle}\bigl({\rm Fit}_{A}^0(M)\bigr) = 
\rho_{\langle n\rangle }\bigl({\rm char}_{A}(M)\bigr).\]
Taking account of Proposition \ref{compact prop}(ii) (and Lemma \ref{A prelim result}(ii)), these equalities imply that the pro-characteristic ideal of the $A$-module 
$M\cong \varprojlim_nM_{(n)}$ is equal to ${\rm char}_{A}(M)$, as required. \end{proof}

\subsection{Structure results}\label{structure results section}

We henceforth fix a global function field $k$ of characteristic $p$ and a Galois extension $K$ of $k$ that is ramified at only finitely many places and such that the group $\Gamma := \Gal(K/k)$ is topologically isomorphic to a direct product $\ZZ_p^\NN\times G$ for a finite abelian group $G$. We fix such an isomorphism and, in addition, a finite non-empty set of places $\Sigma$ of $k$ that contains all places that ramify in $K$ but no place that splits completely in $K$. For every intermediate field $L$ of $K/k$ we set $\Gamma_{\!L} := \Gal(L/k)$ and, if $L/k$ is finite, we write $\mathcal{O}_L^\Sigma$  for the subring of $L$ comprising elements that are integral at all places outside those above $\Sigma$.  

\subsubsection{Statement of the main results} 

For a finite extension $F$ of $k$ in $K$, the result of \cite[Chap. V, Th.
1.2]{tb} implies that the sum 
\[ \theta_{F}^\Sigma := [F:k]^{-1}\sum_{\psi\in \Gamma_{\!F}^\ast}\sum_{\gamma\in \Gamma_{\!F}} \psi(\gamma^{-1})L_\Sigma(\psi,0) \]
is a well-defined element of $\ZZ_p[\Gamma_{\!F}]$, where $\Gamma_{\!F}^\ast$ denotes $\Hom(\Gamma_{\!F},\QQ_p^{c,\times})$ and $L_\Sigma(\psi,0)$ the value at $0$ of the $\Sigma$-truncated Dirichlet $L$-series of $\psi$ (here we use that, in the notation of loc. cit., $\theta_{F}^\Sigma$ is equal to
$\Theta_\Sigma(1)$ and, as $p =
{\rm char}(k)$, the integer $e$ is prime to $p$). Then the behaviour of Dirichlet $L$-series under inflation of characters implies that the elements $\theta_{F}^\Sigma$ are compatible with respect to the projection maps $\ZZ_p[\Gamma_{\!F'}] \to \ZZ_p[\Gamma_{\!F}]$ for $F \subset F'$ and so, for each extension $L$ of $k$ in $K$, we obtain a well-defined element of $\ZZ_p[[\Gamma_{\!L}]]$ by setting  

\[ \theta_{L}^\Sigma := \varprojlim_{U \in \mathcal{U}(\Gamma_{\!L})}\theta_{L^U}^\Sigma.\]
For each such  $L$ we also set 

\[ H^1( (\mathcal{O}_L^\Sigma)_{W{\rm\acute e t}},\ZZ_p(1)) := \varprojlim_{U\in \mathcal{U}(\Gamma_{\!L})}\bigl(\ZZ_p\otimes_\ZZ H^1( (\mathcal{O}_{L^U}^\Sigma)_{W{\rm\acute e t}},\mathbb{G}_m))\]
and both 
\[ \mathrm{Pic}^0(L)_p := \varprojlim_{U\in \mathcal{U}(\Gamma_{\!L})}(\ZZ_p\otimes_\ZZ\mathrm{Pic}^0(L^U))\quad\text{and}\quad \mathrm{Cl}(\mathcal{O}_L^\Sigma)_p := \varprojlim_{U\in \mathcal{U}(\Gamma_{\!L})}(\ZZ_p\otimes_\ZZ\mathrm{Cl}(\mathcal{O}_{L^U}^\Sigma)),\]
%
%
%
where $(-)_{W{\rm\acute e t}}$ denotes the Weil-\'etale site defined by Lichtenbaum in \cite[\S2]{lichtenbaum} and $\mathrm{Pic}^0(L^U)$ the degree zero divisor class group of $L^U$, and the respective limits are with respect to the natural corestriction and norm maps.

We fix a $\ZZ_p$-basis $\{\gamma_i\}_{i\in \mathbb{N}}$ of $\ZZ_p^\NN$ (as at the beginning of \S\ref{gg section}) and, for each $n\in \NN$, we write $\Gamma(n)$ for the $\ZZ_p$-module generated by $\{\gamma_i\}_{i > n}$ and $K_n$ for the fixed field of $\Gamma(n)$ in $K$ (so that $\Gamma_{\!K_n}$ is isomorphic to $\ZZ_p^n\times G$). We also write $\Gamma_{\!v}$ for 
the decomposition group in $\Gamma$ of each $v$ in $\Sigma$ and consider the following condition.   

\begin{hypothesis}\label{hyp} There exists a natural number $n_0$ such that, for every $v$ in $\Sigma$, the group $\Gamma(n_0) \cap \Gamma_{\!v}$ is not open in $\Gamma_{\!v}$. 
\end{hypothesis}

This hypothesis is satisfied in the setting of the main results of both Anglès et al \cite{abbl} and Bley and Popescu \cite{bp} and so the structural aspects of the next result complement these earlier results (see also Remarks \ref{abbl rem} and \ref{bp rem} below). 

We use the basis $\{\gamma_i\}_{i\in \mathbb{N}}$ of $\ZZ_p^\mathbb{N}$ to identify the completed $p$-adic group ring $\ZZ_p[[\Gamma]]$ with the group ring $A = R[G]$ of $G$ over the power series ring $R = \ZZ_p[[\ZZ_p^\mathbb{N}]]$. In the sequel we shall thereby regard  the inverse limit  
\[ M := H^1( (\mathcal{O}_K^\Sigma)_{W{\rm\acute e t}},\ZZ_p(1))\] 
as an $A$-module. For each $n$ we set $A_n:= R_n[G] \cong \ZZ_p[[\Gamma_{\!K_n}]]$ and $M_{(n)} := A_n\otimes_A M$. 

\begin{theorem}\label{main result} The $A$-module $M$ has the following properties.  

\begin{itemize}
\item[(i)] $M$ is finitely presented and, for every $n$, the $A_n$-module $M_{(n)}$ is isomorphic to $H^1( (\mathcal{O}_{K_{n}}^\Sigma)_{W{\rm\acute e t}},\ZZ_p(1))$. 
\end{itemize}
In the remainder of the result we assume that $K$ and $\Sigma$ satisfy Hypothesis \ref{hyp}.
\begin{itemize}
\item[(ii)] $M$ is torsion. 
\item[(iii)] If $|G|$ does not belong to any prime in $\cP_{\!A}(M)$, then there  exists a pseudo-isomorphism  
\[ M \to \bigoplus_{\fp \in \cP_{\!A}(M)}\bigoplus_{i=1}^{i=n_\fp} A/\fp^{e_{\fp,i}}\]
of $A$-modules, for which one has  
\[ \prod_{\fp \in \cP_{\!A}(M)}\fp^{\sum_{i=1}^{i=n_\fp}e_{\fp,i}} = A\cdot \theta_{K}^\Sigma.\]
\item[(iv)] If $|G|$ does not belong to any prime in either $\cP_{\!A}(M)$ or $\cP_{\!A_n}(M_{(n)})$ for any $n\ge n_0$, then for every such $n$ the $A_n$-modules 
\[   H^1( (\mathcal{O}_{K_{n+1}}^\Sigma)_{W{\rm\acute e t}},\ZZ_p(1))^{X_{n+1}=0} \quad \text{and} \quad{\rm Cl}(\mathcal{O}_{K_{n+1}}^\Sigma)_p^{X_{n+1} = 0} \]
are pseudo-null. 
\end{itemize}
\end{theorem}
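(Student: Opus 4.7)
The plan is to prove (i) using Lichtenbaum's Weil-\'etale formalism, and then to deduce (ii)--(iv) by combining our algebraic structure theory from \S\ref{structure section} with the equivariant main conjecture of Deligne, Kato and Burns at each finite level. For (i), I would first realise $H^1((\mathcal{O}_F^\Sigma)_{W{\rm\acute e t}}, \mathbb{G}_m)$, for each finite intermediate field $F$ of $K_n/k$, as the degree-one cohomology of a compatible system of two-term complexes of finitely generated $\ZZ_p[\Gal(F/k)]$-modules coming from a quadratic Weil-\'etale representative of the cohomology complex. Passing to the inverse limit over $F$ at fixed $n$, and using exactness of limits on this compact system, yields a compatible pair of finite presentations of both $M_{(n)}$ over $A_n$ and (on taking a further limit over $n$) of $M$ over $A$, together with the control identification $M_{(n)} \cong H^1((\mathcal{O}_{K_n}^\Sigma)_{W{\rm\acute e t}}, \ZZ_p(1))$. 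The passage to $A$ uses Proposition \ref{compact prop}(i) and Lemma \ref{submodule-pro-residual}(iii), applied with $a \geq n_0$, to promote finite generation of $M_{(n_0)}$ over the Noetherian ring $A_{n_0}$ to finite generation of $M$ over $A$.

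For (ii), the equivariant main conjecture for $\mathbb{G}_m$ over each $K_n/k$ identifies the initial Fitting ideal of the quadratic presentation of $M_{(n)}$ above with $A_n \cdot \theta_{K_n}^\Sigma$, and in particular forces $\theta_{K_n}^\Sigma$ to annihilate $M_{(n)}$. Taking the inverse limit, $\theta_K^\Sigma$ annihilates $M$. Under Hypothesis \ref{hyp} the element $\theta_K^\Sigma$ is a non-zero-divisor in $A$, since the hypothesis rules out the trivial zeros at $v \in \Sigma$ that would arise should $\Gamma(n_0) \cap \Gamma_{\!v}$ be open in $\Gamma_{\!v}$; combining these facts shows $M$ is torsion. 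For (iii), the hypothesis that $|G|$ lies outside every prime in $\cP_{\!A}(M)$ combines with Lemma \ref{A prelim result}(i) and Examples \ref{tame-examples}(iii) to render $M$ an amenable torsion $A$-module, so that the pseudo-isomorphism of the prescribed form is furnished by Theorem \ref{bourbaki-theorem}(ii)(a). To identify the associated product of prime powers with $A \cdot \theta_K^\Sigma$, I would apply Proposition \ref{char-lemma}(i)(b) at each finite level $n \geq n_0$ to obtain $\mathrm{char}_{A_n}(M_{(n)}) = {\rm Fit}_{A_n}^0(M_{(n)}) = A_n \cdot \theta_{K_n}^\Sigma$, and then invoke Proposition \ref{compact prop}(ii) (equivalently, Proposition \ref{quadratic connection}(ii)) to pass to the limit.

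For (iv), the first pseudo-nullness assertion is an immediate application of Proposition \ref{quadratic connection}(i) to $M$ at level $n$, after checking amenability of each $M_{(n)}$ via Examples \ref{tame-examples}(iii) and the hypothesis on $|G|$. The second assertion would then follow by combining this with a short exact sequence of $A_{n+1}$-modules comparing $M_{(n+1)}$ with $\mathrm{Cl}(\mathcal{O}_{K_{n+1}}^\Sigma)_p$, whose cokernel is a finitely generated $\ZZ_p$-module controlled by decomposition groups of $\Sigma$ in $\Gamma_{\!K_{n+1}}$, and taking left-exact $X_{n+1}$-fixed points. The main obstacle in this strategy is to deliver the equivariant main conjecture at finite level in precisely the required form, namely as an equality $\mathrm{char}_{A_n}(M_{(n)}) = A_n \cdot \theta_{K_n}^\Sigma$ of $A_n$-ideals arising from a quadratic presentation of $M_{(n)}$ that is compatible under change of level; once this compatibility is in place, the non-Noetherian statements become formal consequences of the algebraic machinery of \S\ref{structure section} and the limit arguments of \S\ref{compact section}.
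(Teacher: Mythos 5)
Your overall skeleton does match the paper's: compatible quadratic Weil-étale presentations at finite levels, passage to the limit, the results of \cite{db} to identify the level-wise Fitting ideals with Stickelberger elements, and then the structure theory of \S\ref{structure section}. But two steps as you describe them would not go through. First, in claim (i) you cannot "promote" finite generation of $M_{(n_0)}$ over $A_{n_0}$ to finite generation (let alone finite presentation) of $M$ by invoking Proposition \ref{compact prop}(i) and Lemma \ref{submodule-pro-residual}(iii): Proposition \ref{compact prop}(i) takes finite presentation of $M$ as a hypothesis, so its use here is circular, and the hypothesis of Lemma \ref{submodule-pro-residual}(iii) — that the $I_a$-adic topology is finer than the $I_\bullet$-topology — fails for the filtration $I_n=\ker(A\to A_n)$, since for $n>a$ no power of $I_a$ is contained in $I_n$ (e.g. $X_{a+1}^m\notin I_n$ for every $m$). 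The genuine content of (i) is the construction of compatible free presentations of \emph{uniform} rank $d$ along the tower of finite subextensions (the paper's filtration is by the finite-level quotients $\ZZ_p[\Gamma_{\!F_m}]$, not by the rings $A_n$): one needs the representative $P\to \ZZ_p[\Gamma_{\!F_m}]^d$ of $D^\bullet_{F_m,\Sigma}$ from \cite[Prop. 4.1]{db}, Flach's vanishing of the equivariant Euler characteristic to see $P$ is free of rank $d$, Nakayama and the Jacobson-radical argument to make the transition maps (and $\mathrm{GL}_d$ of the transition rings) surjective, and exactness of limits of compact modules. Writing "coming from a quadratic Weil-étale representative" assumes exactly what has to be proved.

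Second, in claim (iii) your route passes through ${\rm char}_{A_n}(M_{(n)})$, Proposition \ref{char-lemma}(i)(b) at level $n$ and Proposition \ref{quadratic connection}(ii); all of these require each $M_{(n)}$ to be an \emph{amenable} $A_n$-module, i.e. that $|G|$ avoids every prime in $\cP_{\!A_n}(M_{(n)})$ — which is the hypothesis of claim (iv), not of claim (iii), so it is not available to you there (and Proposition \ref{compact prop}(ii) only computes $\varprojlim_n\phi_{\langle n\rangle}({\rm char}_A(M))$, not $\varprojlim_n{\rm char}_{A_n}(M_{(n)})$). The paper avoids this entirely by working over $A$ itself: the presentation (\ref{fp}) gives ${\rm Fit}^0_A(M)=A\cdot\det(\theta)$, Proposition \ref{char-lemma}(i)(b) applied to $A=R[G]$ gives ${\rm char}_A(M)={\rm Fit}^0_A(M)$, and the level-$n$ equalities $A_n\cdot\pi_n(\det(\theta))=A_n\cdot\theta_{K_n}^\Sigma$ from \cite[Prop. 4.1, 4.4]{db} are glued into a single unit $u\in A^\times$ with $\det(\theta)=u\cdot\theta_K^\Sigma$. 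Your argument for (ii) (annihilation via ${\rm Fit}^0\subseteq{\rm Ann}$ together with the non-zero-divisor property of the $\theta_{K_n}^\Sigma$ under Hypothesis \ref{hyp}) is sound in substance, though it too rests on the presentation from (i); and in (iv) the second assertion needs only that ${\rm Cl}(\mathcal{O}_{K_{n+1}}^\Sigma)_p$ injects into $M_{(n+1)}$ (via \cite[(4)]{db}) and left-exactness of $X_{n+1}$-invariants — the quotient is not in general finitely generated over $\ZZ_p$, contrary to what you assert, but this plays no role.
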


This result has the following concrete consequence for the $A$-module $\mathrm{Pic}^0(K)_p$.  

\begin{corollary}\label{fg cor} Assume that $K$ and $\Sigma$ satisfy Hypothesis \ref{hyp}. Then $\mathrm{Pic}^0(K)_p$ is torsion over $R$. Further, if $\mathrm{Pic}^0(K)_p$ is finitely generated over $R$, then at most one place that ramifies in $K$ has an open decomposition subgroup and, if such a place $v$ exists, then $\Gamma_v = \Gamma$. \end{corollary}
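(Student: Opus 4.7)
The plan is to deduce the corollary from Theorem \ref{main result} by combining it with the standard exact sequences relating Weil-\'etale cohomology, Picard groups, divisors, and units. For each finite extension $L/k$ in $K$, the Picard sequence of the affine curve $\mathrm{Spec}(\mathcal{O}_L^\Sigma)$, together with Lichtenbaum's localisation sequence in Weil-\'etale cohomology of $\mathbb{G}_m$, gives
\[
0 \to \mathcal{O}_L^{\Sigma,\times}/\mathbb{F}_{q_L}^\times \to \bigoplus_{w \in \Sigma_L}\mathbb{Z} \xrightarrow{\mathrm{div}} \mathrm{Pic}(C_L) \to \mathrm{Cl}(\mathcal{O}_L^\Sigma) \to 0,
\]
and identifies $H^1((\mathcal{O}_L^\Sigma)_{W{\rm\acute e t}},\mathbb{G}_m)$ as an extension of $\bigoplus_{w \in \Sigma_L}\mathbb{Z}$ by $\mathrm{Cl}(\mathcal{O}_L^\Sigma)$. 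Tensoring with $\mathbb{Z}_p$ (which annihilates $\mathbb{F}_{q_L}^\times$ since $(q_L-1,p)=1$) and passing to the inverse limit over finite $L \subset K$ yields exact sequences of $A$-modules
\[
(\mathrm{I})\ 0 \to \mathrm{Cl}(\mathcal{O}_K^\Sigma)_p \to M \to D_K \to 0,\quad (\mathrm{II})\ 0 \to U_K^{\Sigma,p} \to D_K \to \mathrm{Pic}(K)_p \to \mathrm{Cl}(\mathcal{O}_K^\Sigma)_p \to 0,
\]
where $D_K := \bigoplus_{v \in \Sigma}\mathbb{Z}_p[[\Gamma/\Gamma_v]]$ and $U_K^{\Sigma,p}$ is the inverse limit of the $p$-completed $\Sigma$-units modulo constants.

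For the torsion claim, I would use that Theorem \ref{main result}(ii) makes $M$ an $A$-torsion module, and hence $R$-torsion since $A = R[G]$ is free of finite rank over $R$; therefore $\mathrm{Cl}(\mathcal{O}_K^\Sigma)_p \hookrightarrow M$ is $R$-torsion. Hypothesis \ref{hyp} forces each $\Gamma_v$ (for $v \in \Sigma$) to be infinite: if $\Gamma_v$ were finite it would be discrete, so that $\Gamma(n_0) \cap \Gamma_v$ would automatically be open in $\Gamma_v$, contradicting the hypothesis. Since $G$ is finite, the image of $\Gamma_v$ in $\ZZ_p^\NN$ is then non-zero, and if $\gamma \in \Gamma_v$ lifts a non-trivial such element then $\gamma - 1$ is a non-zero divisor of $R$ annihilating $\mathbb{Z}_p[[\Gamma/\Gamma_v]]$. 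Hence $D_K$ is $R$-torsion, and combining $(\mathrm{II})$ with the degree sequence $0 \to \mathrm{Pic}^0(K)_p \to \mathrm{Pic}(K)_p \to \widetilde{\mathbb{Z}_p} \to 0$ exhibits $\mathrm{Pic}^0(K)_p$ as a subquotient of $R$-torsion modules.

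For the finite-generation statement, since $\widetilde{\mathbb{Z}_p}$ and $\mathrm{Cl}(\mathcal{O}_K^\Sigma)_p$ are $R$-finitely generated, $\mathrm{Pic}^0(K)_p$ is $R$-finitely generated if and only if the submodule $D_K/U_K^{\Sigma,p} \subseteq \mathrm{Pic}(K)_p$ is. The key observation is that for a place $v$ with $\Gamma_v$ open in $\Gamma$, the summand $\mathbb{Z}_p[[\Gamma/\Gamma_v]] = \mathbb{Z}_p[\Gamma/\Gamma_v]$ is a free $\mathbb{Z}_p$-module of rank $[\Gamma:\Gamma_v]$ whose contribution to the image $D_K/U_K^{\Sigma,p}$ has full rank up to finite index. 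If two ramified places $v_1 \neq v_2$ both have open $\Gamma_{v_i}$, or if a single ramified place has $\Gamma_v$ open with $\Gamma_v \neq \Gamma$, then one checks via local class field theory at the $\ZZ_p^\NN$-extensions $K_v/k_v$ that the kernel $U_K^{\Sigma,p}$ cannot absorb the resulting independent rank contributions while remaining $R$-finitely generated, which forces the claimed constraint on decomposition groups.

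The main obstacle will be this last step, since in the non-Noetherian setting submodules of finitely generated $R$-modules need not themselves be finitely generated, so one cannot argue purely by $R$-rank. I would therefore complement the unit analysis by descent via Proposition \ref{compact prop}(i): $R$-finite generation of $\mathrm{Pic}^0(K)_p$ implies, after passing to coinvariants, a uniform bound on the number of $A_n$-generators of the classical Iwasawa modules $\mathrm{Pic}^0(K_n)_p$, and this can then be contradicted using standard growth formulas for these modules that involve the number of places of $K_n$ above each $v \in \Sigma$ and grow unboundedly in precisely the decomposition-group scenarios that the corollary excludes.
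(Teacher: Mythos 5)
Your first assertion (that $\mathrm{Pic}^0(K)_p$ is $R$-torsion) is argued essentially as in the paper, via the sequences relating $\mathrm{Cl}(\mathcal{O}_K^\Sigma)_p$, $M$ and the divisor module supported on $\Sigma$ (minor quibble: the third term of your sequence (I) should be the degree-zero part $\ker(\epsilon_\Sigma)$ of $\bigoplus_{v\in\Sigma}\ZZ_p[[\Gamma/\Gamma_v]]$, and your non-zero divisor $\gamma-1$ should be taken with $\gamma$ a non-trivial element of $\ZZ_p^\NN\cap\Gamma_v$, which exists after raising to the power $|G|$). The finite-generation statement, however, is where your argument has a genuine gap, and your proposed mechanism cannot be repaired. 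In the configurations the corollary excludes, every relevant $\Gamma_v$ is \emph{open}, so $\ZZ_p[[\Gamma/\Gamma_v]]=\ZZ_p[\Gamma/\Gamma_v]$ is a finitely generated $\ZZ_p$-module; consequently the divisor contribution $D_K/U_K^{\Sigma,p}$ is automatically finitely generated over $R$, and there simply is no ``independent rank contribution'' for the unit module to fail to absorb. The obstruction is not rank but \emph{finite presentation} over the non-Noetherian ring $A$: if $\mathrm{Pic}^0(K)_p$ is finitely generated then so is $\mathrm{Cl}(\mathcal{O}_K^\Sigma)_p$, and applying coherence results of Glaz to the sequence $0\to \mathrm{Cl}(\mathcal{O}_K^\Sigma)_p\to M\to\ker(\epsilon_\Sigma)\to 0$, with $M$ finitely presented by Theorem \ref{main result}(i), forces $\ker(\epsilon_\Sigma)$ to be finitely presented over $A$. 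If two ramified places $v_1\neq v_2$ have open decomposition groups (resp. one has $\Gamma_{v_1}$ open and proper), then over $A'=\ZZ_p[[\Gamma']]$ with $\Gamma'=\Gamma_{v_1}\cap\Gamma_{v_2}$ (resp. $\Gamma'=\Gamma_{v_1}$) the module $\ker(\epsilon_\Sigma)$ contains a direct summand isomorphic to the trivial module $\ZZ_p$; finite presentation would then pass to this summand and force the augmentation ideal of $A'$, and hence of $A$, to be finitely generated, which is false since $\Gamma'$ contains a copy of $\ZZ_p^\NN$. Nothing in your unit/local class field theory sketch produces this phenomenon.

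Your fallback via Proposition \ref{compact prop}(i) also does not work. There is no codescent isomorphism for $\mathrm{Pic}^0$ analogous to the one Theorem \ref{main result}(i) provides for $M$, so $R$-finite generation of the inverse limit $\mathrm{Pic}^0(K)_p$ gives no control on the number of $A_n$-generators of $\mathrm{Pic}^0(K_n)_p$; moreover, Nakayama-type descent in this setting requires $I_\bullet$-completeness, which finitely generated $A$-modules can fail (Lemma \ref{submodule-pro-residual}(i)). Finally, the growth heuristic is oriented the wrong way: precisely in the excluded scenarios the groups $\Gamma_v$ are open, so the number of places of $K_n$ above such $v$ is bounded by $[\Gamma:\Gamma_v]<\infty$ for all $n$; unbounded splitting occurs only when decomposition groups are \emph{not} open, i.e.\ in the situations the corollary permits. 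So the contradiction you hope to extract from ``standard growth formulas'' is not available.
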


{The proof of these results will occupy the remainder of \S\ref{structure results section}. 

\subsubsection{Preliminaries on Weil-\'etale cohomology} We first recall some general facts about Weil-\'etale cohomology. For this we write $\Der(\Lambda)$ for the derived category of complexes over a commutative Noetherian ring $\Lambda$. 

For a finite extension $F$ of $k$ in $K$ we also write $C_F$ for the unique
geometrically irreducible smooth projective curve with function
field $F$ and $j^\Sigma_{F}$ for the natural open immersion $\Spec(\mathcal{O}_{F}^\Sigma)
\to C_F$. We then define an object of $\Der(\ZZ_p[\Gamma_{\!F}])$ by setting
\[ D_{F,\Sigma}^\bullet := \R\mathrm{Hom}_{\ZZ_p}( \R\Gamma((C_F)_{{\rm 
\acute e t}},j^\Sigma_{F,!}(\ZZ_p)),\ZZ_p[-2]).\] 
%
%
We note that, for this object, there is a canonical composite isomorphism 
\begin{align}\label{weil-nw}H^1(D_{F,\Sigma}^\bullet) \cong&\, \ZZ_p\otimes_\ZZ H^1(\R\mathrm{Hom}_{\ZZ}( \R\Gamma((C_F)_{W{\rm
\acute e t}},j^\Sigma_{F,!}(\ZZ)),\ZZ[-2]))\\
\cong&\, \ZZ_p\otimes_\ZZ H^1( (\mathcal{O}_F^\Sigma)_{W{\rm\acute e t}},\mathbb{G}_m)\notag\\
=&\, H^1( (\mathcal{O}_F^\Sigma)_{W{\rm\acute e t}},\ZZ_p(1)),\notag\end{align}
where the first isomorphism is a consequence of \cite[Prop. 2.4(g)]{lichtenbaum} and the second of the duality theorem in Weil-\'etale cohomology \cite[Th. 5.4(a)]{lichtenbaum} and the equality follows directly from our definition of the last displayed module. 

We next recall (from the proof of \cite[Prop. 4.1]{db}) that $D_{F,\Sigma}^\bullet$ is acyclic in degrees greater than one and such that, for each intermediate field $F'$ of $F/k$, there is a projection formula isomorphism $\ZZ_p[\Gamma_{\!F'}]\otimes^{\Le}_{\ZZ_p[\Gamma_{\!F}]} D^\bullet_{F,\Sigma} \cong D^\bullet_{F',\Sigma}$ in $\Der(\ZZ_p[\Gamma_{\!F'}])$. These facts combine with (\ref{weil-nw}) to imply that the natural corestriction map $H^1( (\mathcal{O}_F^\Sigma)_{W{\rm\acute e t}},\mathbb{G}_m) \to H^1( (\mathcal{O}_{F'}^\Sigma)_{W{\rm\acute e t}},\mathbb{G}_m)$ induces an isomorphism of $\ZZ_p[\Gamma_{\!F'}]$-modules
\begin{equation}\label{codescent iso} \ZZ_p[\Gamma_{\!F'}]\otimes_{\ZZ_p[\Gamma_{\!F}]}
H^1( (\mathcal{O}_F^\Sigma)_{W{\rm\acute e t}},\ZZ_p(1)) \cong H^1((\mathcal{O}_{F'}^\Sigma)_{W{\rm\acute e t}},\ZZ_p(1)).\end{equation} 

\subsubsection{The proof of Theorem \ref{main result}} We fix an exhaustive separated decreasing filtration $(\Delta_n)_n$ of the subgroup $\ZZ_p^\mathbb{N}$ of $\Gamma$ by open subgroups. We set $F_n := L^{\Delta_n}$, write $J_n$ for the kernel of the natural projection map  
\[ A \twoheadrightarrow A_{[n]} :=  \ZZ_p[\Gamma_{\!F_n}] = \ZZ_p[\Gamma/\Delta_n] \cong \ZZ_p[ (\ZZ_p^\mathbb{N}/\Delta_n)][G],\]
and for any $A$-module $N$, respectively map of $A$-modules $\theta$, we set $N_{[n]} := A_{[n]}\otimes_AN$ and $\theta_{[n]} := A_{[n]}\otimes_A\theta$. Then 
\[ J_\bullet := (J_n)_n\]
is a separated decreasing filtration of $A$ with respect to which $A$ is complete. In addition, the isomorphisms (\ref{codescent iso}) with $F/F'$ equal to each $F_n/F_{n-1}$ imply the $A$-module $M$ is $J_\bullet$-complete and that, for every $n$, there is a natural isomorphism $M_{[n]} \cong H^1((\mathcal{O}_{F_n}^\Sigma)_{W{\rm\acute e t}},\ZZ_p(1))$.

Turning now to the proof of Theorem \ref{main result}, we first observe that the isomorphisms in the second assertion of claim (i) are directly induced by the descent isomorphisms (\ref{codescent iso}). We then claim that, to prove the finite-presentability of $M$ (and hence complete the proof of claim (i)), it suffices to inductively construct, for every $n$, an exact commutative diagram of $A_{[n]}$-modules 
\begin{equation}\label{construct diagram} \begin{CD}
A_{[n]}^d @> \theta_n >> A_{[n]}^d @> \pi_n >> M_{[n]} @> >> 0\\
@V\tau_n^0 VV @V\tau_{n}^1 VV @V \tau_{n} VV \\
A_{[n-1]}^d @> \theta_{n-1} >> A_{[n-1]}^d @> \pi_{n-1} >> M_{[n-1]} @> >> 0\end{CD}\end{equation}
in which the natural number $d$ is independent of $n$, all maps $\pi_n$ and $\tau_n^0$ are surjective and $\tau^1_n$ and $\tau_n$ are the tautological projections. To justify this reduction we use the fact that $\Delta_{n-1}/\Delta_n$ is a finite $p$-group and hence that the kernel of the projection $A_{[n]} \to A_{[n-1]}$ is contained in the Jacobson radical of (the finitely generated $\ZZ_p$-algebra) $A_{[n]}$. This in turn implies that the natural maps $\mathrm{GL}_d(A_{[n]}) \to \mathrm{GL}_d(A_{[n-1]})$ are surjective and hence, since $A$ is $J_\bullet$-complete, that the inverse limit of $A_{[n]}^d$ with respect to the maps $\tau_n^0$ is isomorphic to $A^d$. Then, since $M$ is also $J_\bullet$-complete (and the inverse limit functor is exact on the category of finitely generated $\ZZ_p$-modules), by passing to the limit over $n$ of the above diagrams one obtains an exact sequence of $A$-modules 
\begin{equation}\label{fp} A^d \xrightarrow{\theta} A^d \xrightarrow{\pi} M \to 0\end{equation}
(in which $\theta = \varprojlim_n\theta_n$ and $\pi = \varprojlim_n\pi_n$) which shows directly that $M$ is a finitely-presented $A$-module. 

To complete the proof of claim (i), we must therefore construct the diagrams (\ref{construct diagram}). To do this, we note that $F_1$ is a finite extension of $k$ and hence that the $A_{[1]}$-module $M_{[1]}$ is finitely generated. We can therefore fix a natural number $d$ and a subset $\{m_i\}_{1\le i\le d}$ of $M$ whose image in $M_{[1]}$ generates $M_{[1]}$ over $A_{[1]}$. For each $n$, we write $m_{i,n}$ for the projection of $m_i$ to $M_{[n]}$. 
 We then note that, just as above, the kernel of the projection $A_{[n]} \to A_{[1]}$ lies in the Jacobson radical of $A_{[n]}$, and hence that the  tautological isomorphism $A_{[1]}\otimes_{A_{[n]}} M_{[n]} \cong M_{[1]}$ combines with Nakayama's Lemma (for the category of $A_{[n]}$-modules) and our choice of elements $\{m_i\}_{1\le i\le d}$ to imply $\{m_{i,n}\}_{1\le i\le d}$ generates the $A_{[n]}$-module $M_{[n]}$. We therefore obtain the right hand commutative square in (\ref{construct diagram}) by defining $\pi_n$ (and similarly $\pi_{n-1}$) to be the map of $A_{[n]}$-modules that sends the $i$-th element in the  standard basis of $A_{[n]}^d$ to $m_{i,n}$. 

We next recall from the proof of \cite[Prop. 4.1]{db} that $D^\bullet_{F_n,\Sigma}$ can be represented by a complex $P_n \xrightarrow{\theta_n} A_{[n]}^d$ in which $P_n$ is a finitely generated projective $A_{[n]}$-module (placed in degree zero), $\im(\theta_n) = \ker(\pi_{n})$ and 
$\pi_{n}$ induces an isomorphism between $\mathrm{coker}(\theta_n)$ and $M_{[n]}$. Then, since $A_{[n]}$ is a finite product of local rings and the $A_{[n]}$-equivariant Euler characteristic of $D^\bullet_{F_n,\Sigma}$ vanishes (by Flach \cite[Th. 5.1]{flach}), it follows that the $A_{[n]}$-module $P_n$ is free of rank $d$ (and so, after changing $\theta_n$ if necessary, can be taken to be $A_{[n]}^d$). In particular, if we choose both of the rows in (\ref{construct diagram}) in this way, then they are exact and so the commutativity of the right hand square reduces us to proving the existence of a map $\tau_n^0$ that makes the left hand square commute and is also surjective. To do this we can first choose a morphism of $A_{[n-1]}$-modules $\tau'_n:(A_{[n]}^d)_{[n-1]} \to A_{[n-1]}^d$ for which the associated diagram 

\[ \begin{CD} (A_{[n]}^d)_{[n-1]} @> (\theta_n)_{[n-1]} >> (A_{[n]}^d)_{[n-1]}\\
@V\tau'_n VV @V \cong V (\tau_{n}^1)_{[n-1]} V\\
A_{[n-1]}^d @> \theta_{n-1} >> A_{[n-1]}^d\end{CD}\]
commutes and represents the canonical isomorphism $A_{[n-1]}\otimes^{\Le}_{A_{[n]}} D^\bullet_{F_n,\Sigma} \cong D^\bullet_{F_{n-1},\Sigma}$. In particular, since the morphism of complexes represented by this diagram is a quasi-isomorphism and $(\tau_n^1)_{[n-1]}$ is bijective, the map $\tau'_n$ must also be bijective. The composite map 
\[ \tau_n^0: A_{[n]}^d \to (A_{[n]}^d)_{[n-1]} \xrightarrow{\tau'_n} A_{[n-1]}^d\]
is then surjective and such that the diagram (\ref{construct diagram}) commutes, as required to complete the proof of claim (i).  }

In the rest of the argument we assume that $K$ and $\Sigma$ satisfy Hypothesis \ref{hyp}. 

To prove claim (ii) we note that the inclusion $R \to A$ is integral and hence that $M$ is a torsion over $R$ if and only if it is torsion over $A$. The exact sequence (\ref{fp}) therefore implies that $M$ is torsion over $R$ if and only if ${\rm det}(\theta)$ is a non-zero divisor of $A$. To investigate this condition, we recall that, for each $n$,  $K_n$ denotes $K^{\Gamma(n)}$ and we set $\Gamma_{\!n} := \Gamma/\Gamma(n) = \Gal(K_n/k)$ so that $A_n = \ZZ_p[[\Gamma_n]]$. We also write  $I_\bullet := (I_n)_n$ for the decreasing filtration of $A$ in which each $I_n$ is the kernel of the projection map $\pi_n: A \to A_n$. 

Then, for every $n\ge n_0$, Hypothesis \ref{hyp} implies that the decomposition subgroup in $\Gamma_{\!n}$ of every place in $\Sigma$ is infinite. Hence, for each such $n$, the results of \cite[Prop. 4.1 and Prop. 4.4]{db} combine to imply that $\pi_n(\mathrm{det}(\theta))$ and $\theta_{K_n}^\Sigma$ are non-zero divisors of $A_n$ such that 
\begin{equation}\label{n equality} A_n\cdot \pi_n({\rm det}(\theta)) = A_n\cdot \theta_{K_n}^\Sigma.\end{equation}
This implies, in particular, that ${\rm det}(\theta) = (\pi_n({\rm det}(\theta)))_{n\ge n_0}$ is a non-zero divisor in the ring $A = 
\varprojlim_n A_n = \varprojlim_{n \ge n_0}A_n$, and so claim (ii) is proved. 

To prove claim (iii), we note that the stated hypotheses imply that the $A$-module $M$ is finitely presented (by claim (i)), torsion (by claim (ii)) and amenable (by Proposition \ref{char-lemma}(i)(b) and Lemma \ref{A prelim result}(i)), and hence that Theorem \ref{bourbaki-theorem}(ii)(a) implies the existence of a pseudo-isomorphism of $A$-modules 
\begin{equation}\label{fixed pi} M \to \bigoplus_{\fp \in \cP_{\!A}(M)}\bigoplus_{i=1}^{i=n_\fp} A/\fp^{e_{\fp,i}}.\end{equation}
%

Next we note that, as $\pi_n({\rm det}(\theta))$ is a non-zero divisor for each $n\ge n_0$, the equality (\ref{n equality}) implies the existence for each such $n$ of an element $u_n$ of $A_n^\times$ with $\pi_n({\rm det}(\theta)) = u_n\cdot \theta_{K_n}^\Sigma$. In particular, since each $\theta_{K_n}^\Sigma$ is a non-zero divisor, the family $u := (u_n)_{n\ge n_0}$ belongs to $A^\times = 
 \varprojlim_{n \ge n_0}A_n^\times$ and is such that ${\rm det}(\theta) = u\cdot \theta_\Sigma^K$. One therefore has   
\[ \prod_{\fp \in \cP_{\!A}(M)}\fp^{\sum_{i=1}^{i=n_\fp}e_{\fp,i}} = {\rm char}_{\!A}(M) = {\rm Fit}_A^0(M) = 
A\cdot {\rm det}(\theta) = A\cdot\theta_\Sigma^K,\]
where the first equality follows from the pseudo-isomorphism (\ref{fixed pi}) (and the definition of characteristic ideals), the second from Proposition \ref{char-lemma}(i)(b) and the third from the resolution (\ref{fp}).  This proves claim (iii).  


Turning to claim (iv) we note that the resolution (\ref{fp}) combines with the isomorphisms in claim (i) to imply that, for each $n$, the $A_n$-module ${\rm cok}(A_n\otimes_A\theta) \cong  A_n\otimes_AM = M_{(n)}$ is isomorphic to $H^1( (\mathcal{O}_{K_{n}}^\Sigma)_{W{\rm\acute e t}},\ZZ_p(1))$. 

In particular, if $n \ge n_0$, then this module is torsion since ${\rm det}(A_n\otimes_A\theta) = \pi_n(\mathrm{det}(\theta))$ is a non-zero divisor. Hence, under the stated hypothesis on primes in $\cP_{\!A}(M)$ and $\cP_{\!A_n}(M_{(n)})$, the first assertion of claim (iv) follows directly from the argument of Proposition \ref{quadratic connection}. 

The second assertion of claim (iv) is then true since, after taking account of the isomorphisms (\ref{weil-nw}), the $A_n$-module 
$\mathrm{Cl}(\mathcal{O}_{K_n}^\Sigma)_p$ identifies with a submodule of $H^1( (\mathcal{O}_{K_{n}}^\Sigma)_{W{\rm\acute e t}},\ZZ_p(1))$ (cf. \cite[(4)]{db}).

\subsubsection{The proof of Corollary \ref{fg cor}} For each subset $\Sigma'$ of $\Sigma$ we write $\epsilon_{\Sigma'}$ for the canonical  projection map $\bigoplus_{v \in \Sigma'} \ZZ_p[[\Gamma/\Gamma_{\!v}]] \to 
\ZZ_p$. Then 
the exact sequence of \cite[(4)]{db} induces an exact sequence of $A$-modules 

\begin{equation}\label{last ses} 0 \to \mathrm{Cl}(\mathcal{O}_{K}^\Sigma)_p \to M \to \ker(\epsilon_\Sigma) \to 0,\end{equation}
and the exact sequences \cite[(5) and (6)]{db} combine to give an exact sequence of $A$-modules 
\begin{equation}\label{last ses2}  \ker(\epsilon_{\Sigma^K_{\mathrm{fin}}}) \to \mathrm{Pic}^0(K)_p \to \mathrm{Cl}(\mathcal{O}_{K}^\Sigma)_p \to \ZZ_p/(n_K) \to 0,\end{equation}
in which $\Sigma^K_\mathrm{fin}$ is the subset of $\Sigma$ comprising places that have finite residue degree in $K/k$ and $n_K$ is a (possibly zero) integer.  

We now assume Hypothesis \ref{hyp} is satisfied. In this case the $A$-module $M$ is finitely presented and torsion (by Theorem \ref{main result}(i) and (ii)) and the $A$-module $\ker(\epsilon_{\Sigma^K_{\mathrm{fin}}})$ is torsion. The first of these facts combines with the sequence (\ref{last ses}) to imply both that $\mathrm{Cl}(\mathcal{O}_{K}^\Sigma)_p$ is torsion and also (by using the general results of \cite[Th. 2.1.2, (2) and (3)]{glaz}) that $\mathrm{Cl}(\mathcal{O}_{K}^\Sigma)_p$ is finitely generated if and only if $\ker(\epsilon_\Sigma)$ is finitely presented. From the sequence (\ref{last ses2}) we can then deduce that $\mathrm{Pic}^0(K)_p$ is a torsion 
$A$-module (and hence a torsion $R$-module) and also that $\mathrm{Cl}(\mathcal{O}_{K}^\Sigma)_p$ is finitely generated  if 
 $\mathrm{Pic}^0(K)_p$ is finitely generated.   

To complete the proof we now argue by contradiction and, for this, the above observations imply it is enough to assume both that $\ker(\epsilon_\Sigma)$ is finitely presented (over $A$) and that there are either two places $v_1$ and $v_2$ in $\Sigma$ such that $\Gamma_{\!v_1}$ and $\Gamma_{\!v_2}$ are open, or at least one place $v_1$ in $\Sigma$ for which $\Gamma_{\!v_1}$ is open and not equal to $\Gamma$. We then define an open subgroup of $\Gamma$ by setting $\Gamma':= \Gamma_{\!v_1}\cap \Gamma_{\!v_2}$ in the first case and $\Gamma' := \Gamma_{\!v_1}$ in the second case, we set $A' := \ZZ_p[[\Gamma']]$ and we write $I$ and $I'$ for the kernels of the respective projection maps $A \to \ZZ_p$ and $A' \to \ZZ_p$. 

Then the definition of $\Gamma'$ ensures that the $A'$-module $\ker(\epsilon_\Sigma)$ is both finitely-presented and contains a direct summand that is isomorphic to the trivial module $\ZZ_p$. This implies (via \cite[Th. 2.1.2(4)]{glaz}) that $\ZZ_p$ is finitely-presented as an $A'$-module and hence, by applying \cite[Lem. 2.1.1]{glaz} to the short exact sequence 
\[ 0 \to I'\to A' \to \ZZ_p \to 0,\]
that $I'$ is finitely generated over $A'$. However, writing $d$ for the order of $\Gamma/\Gamma'$, there exists an exact sequence of $A'$-modules 
\[0 \to (I')^d \to I \to \ZZ_p^{d-1}\]
and this implies $I$ is finitely generated over $A'$, and hence over $A$, and this is a contradiction. This proves Corollary \ref{fg cor}. 

\begin{remark}\label{abbl rem} For the Carlitz-Hayes cyclotomic extensions $K/k$ considered by Anglès et al in \cite{abbl}, one has $\Gamma = \ZZ_p^\mathbb{N}$ (so $A = R$) and $\Sigma = \{v\}$ with $v$ a place that is totally ramified in $K$. In this case one has $\Gamma_{\!v} = \Gamma$ and it can also be checked the integer $n_K$ in (\ref{last ses2}) is not divisible by $p$, and so the exact sequences (\ref{last ses}) and (\ref{last ses2}) combine to induce identifications 
\[ M = \mathrm{Cl}(\mathcal{O}_{K}^\Sigma)_p = {\rm Pic}^0(K)_p.\]
This fact combines with Proposition \ref{quadratic connection}(ii) to imply that claims (iii) and (iv) of Theorem \ref{main result}   strengthen the main result of \cite{abbl}. \end{remark}

\begin{remark}\label{bp rem} Under the given hypotheses, the argument of Theorem \ref{main result} combines with Proposition \ref{char-lemma}(i)(b) to imply that the Fitting ideal $\mathrm{Fit}_{\!A}^0(M)$ is principal and equal to the generalised characteristic ideal $\mathrm{char}_{\!A}(M)$. To discuss a specific example, we take $K$ to be a Drinfeld modular tower extension $L_\infty$ of $k$ of the form specified by Bley and Popescu in \cite[\S2.2]{bp}, so that $A = R[G]$ with $G$ isomorphic to $\Gal(H_{\mathfrak{f}\fp}/k)$ for a `real' ray class field  $H_{\mathfrak{f}\fp}$ of $k$ relative to a fixed prime ideal $\fp$ and integral ideal $\mathfrak{f}$. Then by comparing (\ref{last ses}) to the exact sequences \cite[(24), (25), (26)]{bp}, and recalling that, under the hypotheses of Theorem \ref{main result}(iii), the localisation of $A$ at each prime ideal in $\cP_{\!A}(M)$ is a discrete valuation ring, one verifies an equality of principal ideals  
\[ \mathrm{Fit}^0_{\!A}(M) = {\rm Fit}^0_{\!A}(T_p(M^{(\infty)}_\Sigma)_\Gamma),\]
where the $A$-module $T_p(M_\Sigma^{(\infty)})_\Gamma = \varprojlim_n T_p(M_\Sigma^{(n)})_\Gamma$ is (quadratically presented and) defined in \cite[\S3.3]{bp} as an inverse limit over the $p$-adic Tate modules of  a canonical family of Picard $1$-motives. This connection shows that claims (iii) and (iv) of Theorem \ref{main result} strengthen \cite[Th. 1.3]{bp} (with $S = \Sigma$). Further, if $\fp$ decomposes in the field $H_{\mathfrak{f}\fp}$, then Corollary \ref{fg cor} implies that ${\rm Pic}^0(L_\infty)_p$ is not finitely generated as an $R$-module.\end{remark}



\begin{thebibliography}{10}

\bibitem{abbl} B. Anglès, A. Bandini, F. Bars, I. Longhi,
\newblock Iwasawa main conjecture for the Carlitz cyclotomic extension and applications,
\newblock Math. Ann. {\bf 376} (2020) 475-523. 

\bibitem{bbl} A. Bandini, F. Bars, I. Longhi,
\newblock Characteristic ideals and Iwasawa theory,
\newblock New York J. Math. {\bf 20} (2014) 759-778.

\bibitem{bbl2} A. Bandini, F. Bars, I. Longhi,
\newblock Characteristic ideals and Selmer groups,
\newblock J. Number Th. {\bf 157} (2015) 530-546. 


\bibitem{bbl3} A. Bandini, F. Bars, I. Longhi, 
\newblock Aspects of Iwasawa theory over function fields,
\newblock in: $t$-motives: Hodge structures, transcendence and other motivic aspects, 375-416, EMS Ser. Congr. Rep., EMS Publ. House, Berlin, 2020. 


\bibitem{bc} A. Bandini, E. Coscelli, 
\newblock Stickelberger series and main conjecture for function fields, 
\newblock Publ. Mat. {\bf 65} (2021) 459-498.

\bibitem{bp} W. Bley, C. D. Popescu,
\newblock Geometric main conjectures in functions fields,
\newblock submitted for publication. 


\bibitem{bourbaki} N. Bourbaki,
\newblock Commutative Algebra,
\newblock Springer, 1989. 





\bibitem{db} D. Burns,
\newblock Congruences between derivatives of geometric $L$-functions (with an appendix by D. Burns, K-F. Lai and K-S. Tan),
\newblock Invent. math. {\bf 184} (2011) 221-256. 


\bibitem{chang} G. W. Chang,
\newblock Rings of formal power series in an infinite set of indeterminates,
\newblock Comm. Algebra {\textbf 42} (2014) 4182-4187.



\bibitem{cohen} I. S. Cohen,
\newblock Commutative rings with restricted minimum condition,
\newblock Duke Math. J. {\textbf 17} (1950) 27-42.





\bibitem{endo} S. Endo,
\newblock On semi-hereditary rings,
\newblock J. Math. Soc. Japan {\textbf{13}} (1961) 109-119.

\bibitem{flach} M. Flach,
\newblock Euler characteristics in relative $K$-groups,
\newblock Bull. London Math. Soc. {\bf 32} (2000) 272-284.


\bibitem{fuchs-salce} L. Fuchs, L. Salce,
\newblock Modules over non-Noetherian domains,
\newblock  Mathematical Surveys and Monographs \textbf{84}, Amer. Math. Soc.  (2001).

\bibitem{fujiwara-gabber-kato} K. Fujiwara, O. Gabber, F. Kato,
\newblock On Hausdorff completions of commutative rings in rigid geometry,
\newblock J. Algebra {\bf 332} (2011) 293-321.


\bibitem{gilmour} R. Gilmer, 
\newblock Power series rings over a Krull domain,
\newblock Pacific J. Math. {\bf 29} (1969) 543-549.



\bibitem{glaz-fcd} S. Glaz,
\newblock Finite conductor rings,
\newblock Proc. Amer. Math. Soc. {\bf 129} (2001) 2833-2843.

\bibitem{glaz} S. Glaz,
\newblock Commutative coherent rings,
\newblock Lect. Notes Math. {\bf 1371},  Springer, Berlin, 2006.


\bibitem{lichtenbaum} S. Lichtenbaum,
\newblock The Weil-\'etale topology on schemes over finite fields,
\newblock  Compos. Math. {\bf 141} (2005) 689-702.


\bibitem{matsumura} H. Matsumura,
\newblock Commutative Ring Theory,
\newblock Cambridge University Press, 1989.

\bibitem{mrt} J. Min\'a\v c, M. Rogelstad, N. D. T\^ an, 
\newblock Relations in the maximal pro-$p$ quotients of absolute Galois groups, 
\newblock Trans. Amer. Math. Soc. {\bf 373} (2020) 2499-2524.



\bibitem{nishimura} H. Nishimura,
\newblock On the unique factorisation theorem for formal power series,
\newblock J. Math. Kyoto Univ. {\bf 7} (1967) 151-160.



\bibitem{pr} B. Perrin-Riou, 
\newblock Arithm\'etique des courbes elliptiques et th\'eorie
d’Iwasawa,
\newblock M\'em. Soc. Math. Fr. {\bf 17} (1984). 


\bibitem{tb} J. Tate,
\newblock Les Conjectures de Stark sur les Fonctions $L$ d'Artin
en $s= 0$ (notes par D. Bernardi et N. Schappacher),
\newblock Progress in Math., {\bf 47}, Birkh\"auser, Boston, 1984.
%



\bibitem{warfield} R. B. Warfield Jr.,
\newblock Decomposability of finitely presented modules,
\newblock Proc. Amer. Math. Soc {\bf 25} (1970) 167-172.



\end{thebibliography}
\end{document}